\let\mathg\mathfrak
\theoremstyle{plain}
\newtheorem{cor}{Corollary}[section]
\newtheorem{lem}{Lemma}[section]
\newtheorem{thm}{Theorem}[section]            
\newtheorem{prop}{Proposition}[section]
\theoremstyle{definition}
\newtheorem{exa}{Example}[section]
\newtheorem{NB}{Remark}[section]
\newcommand{\bdm}{\begin{displaymath}}
\newcommand{\edm}{\end{displaymath}}
\newcommand{\be}{\begin{equation}}
\newcommand{\ee}{\end{equation}}
\newcommand{\ba}[1]{\begin{array}{#1}}
\newcommand{\ea}{\end{array}}
\newcommand{\btab}{\begin{tabular}}
\newcommand{\etab}{\end{tabular}}
\newcommand{\Id}{\ensuremath{\mathrm{Id}}}
\newcommand{\C}{\ensuremath{\mathbb{C}}}
\newcommand{\R}{\ensuremath{\mathbb{R}}}
\newcommand{\CP}{\ensuremath{\mathbb{CP}}}
\newcommand{\T}{\ensuremath{\mathrm{T}}}
\newcommand{\G}{\ensuremath{\mathrm{G}}}
\newcommand{\Ric}{\ensuremath{\mathrm{Ric}}}
\newcommand{\Scal}{\ensuremath{\mathrm{Scal}}}
\newcommand{\Lin}{\ensuremath{\mathrm{Lin}}}
\newcommand{\un}{\ensuremath{\mathg{u}}}
\newcommand{\su}{\ensuremath{\mathg{su}}}
\newcommand{\SU}{\ensuremath{\mathrm{SU}}}
\newcommand{\so}{\ensuremath{\mathg{so}}}
\newcommand{\hol}{\ensuremath{\mathg{hol}}}
\newcommand{\g}{\ensuremath{\mathfrak{g}}}
\begin{document}
\def\haken{\mathbin{\hbox to 6pt{%
                 \vrule height0.4pt width5pt depth0pt
                 \kern-.4pt
                 \vrule height6pt width0.4pt depth0pt\hss}}}
    \let \hook\intprod
\setcounter{equation}{0}
%
%
\thispagestyle{empty}
%
\date{\today}
\title[Cocalibrated $\G_2$-manifolds with Ricci flat characteristic connection]
{Cocalibrated $\G_2$-manifolds with Ricci flat characteristic connection}
%
%
%
\author{Thomas Friedrich}
\address{\hspace{-5mm} 
Thomas Friedrich\newline
Institut f\"ur Mathematik \newline
Humboldt-Universit\"at zu Berlin\newline
Sitz: WBC Adlershof\newline
D-10099 Berlin, Germany\newline
{\normalfont\ttfamily friedric@mathematik.hu-berlin.de}}
%
\subjclass[2000]{Primary 53 C 25; Secondary 81 T 30}
\keywords{cocalibrated $\G_2$-manifolds, 
connections with torsion}  
\begin{abstract}
Any $7$-dimensional cocalibrated $\G_2$-manifold admits a unique connection 
$\nabla$ with skew symmetric torsion (see  \cite{FriedrichIvanov}). We 
study these
manifolds under the additional condition that the $\nabla$-Ricci tensor 
vanishes.
In particular, we describe their geometry in case of a maximal number of 
$\nabla$-parallel vector fields. 
\end{abstract}
\maketitle
\pagestyle{headings}
%
%
\section{Introduction}\noindent
%
%
Consider a triple $(M^n \, , \, g \, , \, \T)$ consisting of a Riemannian
manifold $(M^n \, , \, 
g)$ equipped with a $3$-form $\T$. We denote by $\nabla^g$ , 
 $\Ric^g$ and $\Scal^g$ the Levi-Civita
connection, the Riemannian Ricci tensor and the
scalar curvature. The formula
\bdm
\nabla_X Y \ := \ \nabla^g_X Y \ + \ \frac{1}{2} \, \T(X \, , \, Y \, , \,
 - )
 \edm
defines a metric connection with torsion $\T$. We will denote by 
$\Ric^{\nabla}$ and $\Scal^{\nabla}$ its Ricci tensor and scalar
curvature
respectively. If the Ricci tensor $\Ric^{\nabla} = 0$ vanishes, then $\T$ is a
coclosed form, $\delta \T = 0$ ,  and the Riemannian Ricci tensor is completely given by the
$3$-form $\T$ (see \cite{FriedrichIvanov}), 
\bdm
\Ric^g(X \, , \, Y) \ = \ \frac{1}{4} \, \sum_{i,j=1}^n \T(X \, , \, e_i \, ,
\, e_j) \cdot \T(Y \, , \, e_i \, ,\, e_j) \ , 
\quad 
\Scal^g \ = \ \frac{3}{2} \, \|\T\|^2 \ .
\edm
In particular, the Ricci tensor $\Ric^g$ is non-negative, 
$\Ric(X , X) \geq 0$.\\

\noindent
Let us introduce the $4$-form $\sigma_{\T}$ depending on $\T$,
\bdm
\sigma_{\T} \ = \ \frac{1}{2} \, \sum_{i=1}^n \, (e_i \haken \T) \wedge (e_i
\haken \T) \ .
\edm
If moreover there exists a $\nabla$-parallel spinor field $\Psi$, then there
is an algebraic link between $d\T$, $\nabla \T$ and $\sigma_{\T}$
(see \cite{FriedrichIvanov}), 
\bdm
\big( X \, \haken \, d\T \ + \ 2 \, \nabla_X \T \big) \cdot \Psi \ = \ 0 \ ,
\quad \big( 3 \, d\T \ - \ 2 \sigma_{\T} \big) \cdot \Psi \ = \ 0 \ . 
\edm

\noindent
The classification of flat metric connections with skew symmetric torsion has
been investigated by Cartan and Schouten in 1926. Complete proofs are known
since the beginning of the 70-ties. In \cite{AgFr3} one finds a
simple proof of this result. Therefore, we are interested in
non-flat ($\mathcal{R}^{\nabla} \not\equiv 0$) and $\nabla$-Ricci flat ($\Ric^{\nabla}
\equiv 0$) metric connections with skew symmetric torsion $\T \not\equiv 0$.\\

\noindent
In this paper we study the $7$-dimensional case. Any cocalibrated 
$\G_2$-manifold admits 
a unique connection $\nabla$ with skew symmetric torsion and $\nabla$-parallel 
spinor field $\Psi$. 
If this characteristic connection is Ricci flat, then we obtain a 
solution of the 
Strominger equations (see \cite{FriedrichIvanov}), 
\bdm
\nabla \Psi \ = \ 0 , \quad \Ric^{\nabla} \ = \ 0 , \quad d*\T \ = \ 0 . 
\edm   
If $\T = 0$, $M^7$ is a Riemannian manifold with holonomy $\G_2$ and 
$\Ric^{g} =  0$ follows automatically.  
The case of $\T \not\equiv 0$ is different. The condition 
$\Ric^{\nabla} \equiv 0$ is not a consequence of the fact that the holonomy
of $\nabla$ is contained in $\G_2$, it is a new condition for the cocalibrated 
$\G_2$-structure.
In this paper we investigate the geometry of the $7$-manifolds under 
consideration. Moreover, we describe all these manifolds with a large
number of $\nabla$-parallel vector fields.
%
\section{Examples of Ricci flat connections with skew symmetric torsion }\noindent
%
Let us discuss some examples.
\begin{exa}
Any Hermitian manifold admits a unique metric connection $\nabla$ preserving
the complex structure and with skew symmetric torsion (see
\cite{FriedrichIvanov}) . In \cite{GGP} the authors constructed on
$(k-1) \, (S^2 \times S^4) \, \# \, k \, (S^3 \times S^3)$ a Hermitian
structure with vanishing $\nabla$-Ricci tensor, $\Ric^{\nabla} = 0$, for 
any $k \geq 1$ . These
examples are toric bundles over special K\"ahler $4$-manifolds.
\end{exa}
\begin{exa}
There are $7$-dimensional cocalibrated $\G_2$-manifolds $(M^7 \, , \, g \, ,
\, 
\omega^3)$ with characteristic torsion $\T$ such that
\bdm
\nabla \T \ = \ 0 \ , \quad d \T \ = \ 0 \ , \quad \delta \T \ = \ 0 \ , \quad 
\Ric^{\nabla} \ = \ 0 \ , \quad \hol(\nabla) \ \subset \
\un(2) \ \subset \ \g_2 \ . 
\edm
The regular $\G_2$-manifolds of this type have been described in \cite{Fri2},
Theorem 5.2  (the degenerate case   $ 2a+  c = 0$). $M^7$ is the
product $X^4 \times S^3$, where $X^4$ is a Ricci-flat K\"ahler manifold and
$S^3$ the round sphere. 
\end{exa}
\begin{exa}
A suitable deformation of any Sasaki-Einstein manifold yields a metric
connection with skew symmetric torsion and vanishing Ricci tensor, see 
\cite{AgFe}. 
\end{exa}

\noindent
Next we describe a similar method in order to construct $5$-dimensional
connections with skew symmetric torsion and vanishing Ricci tensor.
\begin{thm}\label{Theorem1}
Let $(Z^4 \, , \, g \, , \, \Omega^2)$ be a $4$-dimensional Riemannian
manifold equipped with a $2$-form $\Omega^2$ such that
\begin{enumerate}
\item $d \Omega^2 \ = \ 0$ , $d * \Omega^2 \ = \ 0\ $ and $\ \Omega^2 \wedge
  \Omega^2 \ = \ 0$.
\item The $2$-dimensional distributions
\bdm
E^2 \ = \ \big\{ X \in TZ^4 \, : \ X \haken \Omega^2 \ = \ 0 \big\} \, , \quad
F^2 \ = \ \big\{ X \in TZ^4 \, : X \perp E^2 \big\}
\edm 
are integrable.
\item The $2$-form is of the form $\Omega^2 = 2 a \, f_1 \wedge f_2$, where
  $a$  is
  constant and $f_1, f_2$ is an oriented orthonormal frame in $F^2$.
\item The Riemannian Ricci tensor of $Z^4$ has two non-negative eigenvalues of
  multiplicity two, 
\bdm
\Ric^g \ = \ 4 a^2 \, \mathrm{Id} \ \mathrm{on} \ F^2  , \quad  
\Ric^g \ = \ 0 \ \mathrm{on} \ E^2  . 
\edm
\item $\Omega^2$ is the curvature form of some $\R^1$- or $S^1$-connection $\eta$.
\end{enumerate}
Then the principal fiber bundle $\pi : N^5 \rightarrow Z^4$
defined by $\Omega^2$ admits a Riemannian metric and the torsion form
\bdm
\T \ = \  \pi^*(\Omega^2) \wedge \eta
\edm
yields a metric connection $\nabla$  with the following properties:
\bdm
||\T||^2 \ = \ 4 a^2 \ , \ d\T \ = \ 0 \ , \ d*\T \ = \ 0 \ , \ \Ric^{\nabla} \ =
0 \ , \ \nabla \eta \ = \ 0 \ .
\edm
\end{thm}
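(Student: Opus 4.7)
The plan is to equip $N^5$ with the Kaluza-Klein metric $g_N := \pi^* g + \eta \otimes \eta$ and verify the five conclusions in an adapted local orthonormal frame. Choose an orthonormal frame $h_1, h_2$ of $E^2$ and set $e_1 := \tilde h_1$, $e_2 := \tilde h_2$, $e_3 := \tilde f_1$, $e_4 := \tilde f_2$ (horizontal lifts relative to $\eta$), together with the unit vertical field $e_5$ dual to $\eta$. In this frame the torsion form reads $\T = \pi^*\Omega^2 \wedge \eta = 2a\, e^3 \wedge e^4 \wedge e^5$, so $\|\T\|^2 = 4a^2$ is immediate. The two closedness conditions are formal: using $d\eta = \pi^*\Omega^2$ one gets $d\T = \pi^*(d\Omega^2) \wedge \eta - \pi^*(\Omega^2 \wedge \Omega^2) = 0$ by $(1)$; and since $*\T = \pi^*(*_Z \Omega^2)$, $d*\T = \pi^*(d*_Z \Omega^2) = 0$, again by $(1)$.

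Checking $\nabla \eta = 0$ reduces to $\nabla e_5 = 0$ and is a direct computation. The only nontrivial bracket involving the vertical direction is $[e_3, e_4] = -2a\, e_5 + (\text{horizontal})$, coming from $d\eta(e_3, e_4) = 2a$. Koszul then yields $\nabla^g_{e_3}e_5 = a\, e_4$, $\nabla^g_{e_4}e_5 = -a\, e_3$, and $\nabla^g_X e_5 = 0$ for $X \in \{e_1, e_2, e_5\}$, while the torsion correction $\frac{1}{2}\T(X, e_5, \cdot)^{\sharp}$ produces $-a\, e_4, a\, e_3, 0, 0, 0$ respectively; the two contributions cancel.

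The substantive step is $\Ric^\nabla = 0$. Using the Friedrich--Ivanov identity $\Ric^\nabla(X, Y) = \Ric^g(X, Y) - \frac{1}{4}\sum_{i,j}\T(X, e_i, e_j)\T(Y, e_i, e_j) - \frac{1}{2}\delta\T(X, Y)$ together with $\delta\T = 0$, it suffices to show that $\Ric^g$ on $N^5$ matches the quadratic $\T$-term. A direct inspection shows that the latter is diagonal in our frame, with eigenvalue $2a^2$ on the span of $e_3, e_4, e_5$ and zero on $e_1, e_2$. On the other hand, the Kaluza-Klein / O'Neill formulas for a Riemannian submersion with vertical unit Killing field give: on horizontal $X, Y$, $\Ric^g_{N^5}(X, Y) = \pi^*\Ric^g_{Z^4}(X, Y) - \frac{1}{2}\langle X \haken \Omega^2, Y \haken \Omega^2 \rangle$; the mixed component equals $\frac{1}{2}(\delta \Omega^2)(X)$ and vanishes by $(1)$; and $\Ric^g_{N^5}(e_5, e_5) = \frac{1}{2}\|\Omega^2\|^2 = 2a^2$. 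Feeding in the Ricci data of $(4)$ and the rigid shape of $\Omega^2$ from $(3)$, both tensors agree pointwise. The real obstacle I expect is purely bookkeeping --- reconciling the normalisation conventions between the Kaluza-Klein Ricci formulas and the torsion correction --- so the safest strategy is to forgo citations and compute the relevant components of $R^g_{N^5}$ from scratch in the frame $(e_i)$.
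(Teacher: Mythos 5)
Your proposal is correct and follows essentially the same route as the paper: the paper's proof consists precisely of invoking the O'Neill formulas for the Kaluza--Klein metric $\pi^*g+\eta\otimes\eta$ and checking that $\Ric^g(X,Y)-\frac{1}{4}\sum_{i,j}\T(X,e_i,e_j)\,\T(Y,e_i,e_j)=0$, which (together with $\delta\T=0$) is exactly your substantive step. The remaining verifications ($\|\T\|^2=4a^2$, $d\T=0$, $d*\T=0$, $\nabla\eta=0$) are the same formal computations the paper leaves implicit, and your component checks (eigenvalue $2a^2$ on $\mathrm{span}(e_3,e_4,e_5)$, zero on $E^2$, vanishing mixed terms via $\delta\Omega^2=0$) all come out right.
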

\begin{proof}
Apply O'Neill's formulas and compute
\bdm
\Ric^g(X \, , \, Y) \ - \ \frac{1}{4} \, \sum_{i,j=1}^5 \T(X \, , \, e_i \, ,
\, e_j) \cdot \T(Y \, , \, e_i \, ,\, e_j) \ = \ 0 \ . \qedhere 
\edm
\end{proof}
\begin{exa}
Let $u = u(x,y)$ be a smooth function of two variables and consider the
metric
\bdm
g \ = \ e^u \, x \, \big(dx^2 \, + \, dy^2 \big) \ + \ x \, dz^2 \ + \ 
\frac{1}{x} \big( dt \, + \, y \, dz \big)^2 
\edm
defined on the set $Z^4 = \{(x,y,t,z) \in \R^4 : x > 0 \}$. $(Z^4 , g)$ is a
K\"ahler manifold and the Riemannian Ricci tensor has two eigenvalues, namely zero
and
\bdm
- \, \frac{u_{xx} + u_{yy}}{2 x e^u} ,
\edm
both  with multiplicity two (see  \cite{AAD} , \cite{LeBrun}). If the function
$u$ is a solution of the equation
\bdm
- \, \frac{u_{xx} + u_{yy}}{2 x e^u} \ = \ 4 a^2, 
\edm
Theorem \ref{Theorem1} is applicable and
we obtain a family of non-flat $5$-dimensional examples. Remark that a compact
K\"ahler manifold $Z^4$ of that type splits into $S^2 \times T^2$ , see \cite{ADM}. The corresponding
connection $\nabla$ on the Lie group $N^5 = S^3 \times T^2$ is flat, see
\cite{AgFr3} . 
\end{exa}
%

\section{Cocalibrated  $\G_2$-manifolds with vanishing characteristic Ricci tensor }
%
\noindent
Consider a cocalibrated $\G_2$-manifold $(M^7 \, , g \, , \, \omega^3)$, 
\bdm
d * \omega^3 \ = \ 0 , \quad \|\, \omega^3 \, \|^2 \ = \ 7 ,
\edm
and suppose that the $\G_2$-structure $\omega^3$ is not $\nabla^g$-parallel
(i.e. $d\, \omega^3 \not\equiv 0$).
There exists a unique metric connection $\nabla$ with skew symmetric
torsion and preserving the $\G_2$-structure $\omega^3$. Its torsion form is
given by the formula (see \cite{FriedrichIvanov}),
\bdm
\T \ = \ - \, * \, d \omega^3 \ + \ \mu \, \omega^3 \ , \quad
\mu \ = \ \frac{1}{6} \, \big( d \omega^3 \, , \, * \omega^3 \big) \ .
\edm 
The condition $\Ric^{\nabla} = 0$ becomes equivalent to $d \, \T = 0$ and $d * \T
= 0$. Indeed, we have:
\begin{thm}[{\cite[Thm 5.4]{FriedrichIvanov}}]\label{thm.equiv-cond}
The following conditions are equivalent:
\begin{enumerate}
\item $\Ric^{\nabla} \, = \, 0$.
\item $d\, \T \, = \, 0$ and $d * \T \, = \, 0$.
\item $d \mu \, = \, 0$ and $d * d \omega^3 \, - \, \mu \, d \, \omega^3 \, = \, 0$.
\end{enumerate}
\end{thm}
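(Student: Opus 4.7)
The plan is to treat $(2)\Leftrightarrow(3)$ as a direct algebraic consequence of the formula $\T = -*d\omega^3 + \mu\,\omega^3$, and then to reduce $(1)\Leftrightarrow(2)$ to the Friedrich--Ivanov spinorial identities recalled in the introduction combined with $\G_2$-representation theory.

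For $(2)\Leftrightarrow(3)$ I would first use that in dimension seven $**=\mathrm{Id}$ on $\Lambda^3$ and $\Lambda^4$, so that $*\T = -d\omega^3 + \mu\,*\omega^3$. The cocalibration hypothesis $d*\omega^3 = 0$ then gives $d*\T = d\mu\wedge *\omega^3$. The wedge map $\alpha\mapsto\alpha\wedge *\omega^3$ is a nonzero $\G_2$-equivariant morphism between the two irreducible seven-dimensional $\G_2$-modules $\Lambda^1$ and $\Lambda^5$, hence an isomorphism by Schur. Thus $d*\T=0\Leftrightarrow d\mu=0$. Under this condition the expansion
\[
 d\T \ = \ -\,d*d\omega^3 \, + \, d\mu\wedge\omega^3 \, + \, \mu\,d\omega^3
\]
collapses to $d\T = \mu\,d\omega^3 - d*d\omega^3$, which yields the last equivalence.

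For $(1)\Leftrightarrow(2)$ I would work with the $\nabla$-parallel spinor $\Psi$ attached to the cocalibrated $\G_2$-structure. For any metric connection with skew-symmetric torsion the antisymmetric part of $\Ric^{\nabla}$ equals $-\tfrac12\delta\T$, so $\Ric^{\nabla}=0$ forces $\delta\T=0$, equivalently $d*\T=0$. To control the symmetric part I would differentiate $\nabla_X\Psi = 0$ once more and insert the result into the curvature endomorphism, producing the Friedrich--Ivanov spinorial identity
\[
 \Ric^{\nabla}(X)\cdot\Psi \ = \ \tfrac12\,\bigl(X\haken d\T\bigr)\cdot\Psi
 \qquad (\text{valid when } \delta\T = 0).
\]
Since Clifford multiplication by a non-zero tangent vector is injective on spinors, this reduces the theorem to the equivalence between $\Ric^{\nabla} = 0$ and $(X\haken d\T)\cdot\Psi = 0$ for all $X$.

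The main obstacle is the step $(X\haken d\T)\cdot\Psi = 0\ \forall X \ \Longleftrightarrow\ d\T=0$. The forward implication is trivial; for the converse I would exploit the algebraic constraint $(3d\T - 2\sigma_{\T})\cdot\Psi=0$ from the introduction and work inside the $\G_2$-decompositions
\[
 \Lambda^3 T^*M^7 \ = \ \Lambda^3_1\oplus\Lambda^3_7\oplus\Lambda^3_{27}, \qquad
 \Lambda^4 T^*M^7 \ = \ \Lambda^4_1\oplus\Lambda^4_7\oplus\Lambda^4_{27}.
\]
The kernel of the Clifford action $\Lambda^3\to\End\,\Psi$ is precisely $\Lambda^3_{27}$, so the hypothesis says that $X\haken d\T$ lies in $\Lambda^3_{27}$ for every tangent vector $X$. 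A component-wise analysis of the $\G_2$-equivariant contraction $\Lambda^1\otimes\Lambda^4 \to \Lambda^3_1\oplus\Lambda^3_7$, combined with the quadratic constraint $3d\T\cdot\Psi = 2\sigma_{\T}\cdot\Psi$, should successively kill the $\Lambda^4_1$, $\Lambda^4_7$ and $\Lambda^4_{27}$ pieces of $d\T$. This representation-theoretic bookkeeping on a cocalibrated $\G_2$-structure, rather than any generic torsion-connection identity, is the real content of the theorem.
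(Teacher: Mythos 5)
The paper itself offers no proof of this statement --- it is quoted verbatim from \cite{FriedrichIvanov} --- so there is no internal argument to measure yours against; what follows assesses the proposal on its own terms. Your proof of $(2)\Leftrightarrow(3)$ is complete and correct: cocalibration gives $d*\T = d\mu\wedge *\omega^3$, and injectivity of $\alpha\mapsto\alpha\wedge *\omega^3$ on $\Lambda^1$ follows from Schur's lemma applied to the irreducible \emph{source} (your description of $\Lambda^5$ as a seven-dimensional irreducible module is a slip --- it is $21$-dimensional and reducible --- but only the irreducibility of $\Lambda^1$ and the non-vanishing of the map are needed). The reduction of $(1)\Leftrightarrow(2)$ to the implication ``$(X\haken d\T)\cdot\Psi=0$ for all $X$ $\Rightarrow$ $d\T=0$'', via the antisymmetric part of $\Ric^{\nabla}$ and the Friedrich--Ivanov identity $\Ric^{\nabla}(X)\cdot\Psi=\tfrac12(X\haken d\T)\cdot\Psi$, is also sound.

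The gap is that you leave exactly this decisive implication as something that ``should'' follow from bookkeeping; since you yourself identify it as the real content of the theorem, it cannot be asserted without being carried out. It does hold, and more cheaply than you suggest: the quadratic constraint $3\,d\T\cdot\Psi=2\sigma_{\T}\cdot\Psi$ is not needed, because the \emph{linear} map $\Lambda^4\to\Lambda^1\otimes\Delta_7$, $\Sigma\mapsto\sum_i e_i\otimes(e_i\haken\Sigma)\cdot\Psi_0$, is already injective. To see this, check that each summand $\Lambda^4_1,\Lambda^4_7,\Lambda^4_{27}$ maps nontrivially. Since $2$-forms act skew-adjointly while $3$- and $4$-forms act symmetrically on spinors, the only $\G_2$-invariant functionals available give
\bdm
\big\langle (X\haken\Sigma)\cdot\Psi_0\,,\,\Psi_0\big\rangle \ = \ c\,\big\langle \Sigma\,,\,X\wedge\omega^3\big\rangle\,,\qquad
\big\langle (X\haken\Sigma)\cdot\Psi_0\,,\,Y\cdot\Psi_0\big\rangle \ = \ -\,c'\,\big\langle \Sigma\,,\,X\wedge(Y\haken *\omega^3)\big\rangle
\edm
with $c,c'\neq 0$. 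The first functional detects $\Lambda^4_7=\{X\wedge\omega^3\}$; the second detects $\Lambda^4_1$ (set $X=Y$ and sum) and $\Lambda^4_{27}$, because the symmetric traceless part $h\mapsto\sum_{i,j}h_{ij}\,e_i\wedge(e_j\haken *\omega^3)$ is a nonzero $\G_2$-map $\Sym^2_0(\R^7)\to\Lambda^4$ and hence an isomorphism onto $\Lambda^4_{27}$; for instance $h=e_1\odot e_1-e_2\odot e_2$ yields $e_{1457}+e_{1367}+e_{2467}-e_{2357}\neq 0$ in the frame used in the paper. With this computation supplied, your argument closes.
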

\noindent
Using the $\G_2$-splitting of $3$-forms, $\Lambda^ 3 = \Lambda^3_1 \oplus
\Lambda^3_7 \oplus \Lambda^3_{27}$, we know that  the characteristic torsion
of a cocalibrated $\G_2$-manifold belongs to $\T \in \Lambda^3_1 \oplus
\Lambda^3_{27}$. In particular, we obtain
\bdm
\T \, \wedge \, \omega^3 \ = \ 0 \ .
\edm
Differentiating the latter equation and using $d \, \T = 0$ one gets
\bdm
\big( * \, d \, \omega^3  -  \mu \, \omega^3) \wedge\omega^3 \ = \ 0 , 
\quad \|d \, \omega^3\|^2 \ = \ 6 \, \mu^2  .
\edm
We compute the length of $\T$,
\bdm
\| \T \|^2 \ = \ \| d \omega^3 \|^2  -  2 \, \mu  \big(* d \omega^3  , \,
\omega^3 \big) + 7 \, \|\omega^3\|^2 \ = \ 6 \, \mu^2 - 12 \, \mu^2  +
 7 \mu^2 \ = \ \mu^2 . 
\edm
Consequently, $\|\T \|^2$ is constant. Moreover, the Riemannian scalar curvature
is constant, too,
\bdm
\Scal^g \ = \ \frac{3}{2} \, \|\T \|^2 \ = \ \frac{3}{2} \, \mu^2 .
\edm
Since $( \T  , \omega^3) = \mu$, we
decompose the torsion form into two parts according to the splitting of
$3$-forms,
\bdm
\T \ = \ \T_1  +  \T_{27} , \quad \T_1 \ = \ \frac{1}{7} \, \mu \, \omega^3, 
\quad \T_{27} \ = \ -  *  d \omega^3 + \frac{6}{7} \mu \omega^3 .
\edm
\begin{cor}[{\cite[Remark 5.5]{FriedrichIvanov}}]
Let $(M^7 \, , \, g \, , \, \omega^3)$ be a compact, cocalibrated
$\G_2$-manifold with $\Ric^{\nabla} = 0$ and $\T \not= 0$. Then the third
cohomology group is non-trivial,
\bdm
H^3( M^7 \, ; \, \R) \ \not= \ 0 \ .
\edm
\end{cor}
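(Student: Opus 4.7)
The plan is to exhibit the torsion $\T$ itself as a nontrivial de Rham cohomology class in $H^3(M^7;\R)$. Since we are working on a compact oriented Riemannian manifold, Hodge theory tells us that a harmonic form is exact only if it vanishes identically, so it suffices to verify that $\T$ is harmonic and nonzero.

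First I would invoke Theorem \ref{thm.equiv-cond}: the hypothesis $\Ric^{\nabla}=0$ gives at once $d\T=0$ and $d*\T=0$, which is exactly the statement that $\T$ is a harmonic $3$-form with respect to the Riemannian metric $g$. Next, from the identity $\|\T\|^{2}=\mu^{2}$ established just above, together with the constancy of $\mu$, the assumption $\T\not\equiv 0$ forces $\mu\neq 0$ and in fact $\T$ is pointwise nonzero everywhere on $M^{7}$; in particular its global $L^{2}$-norm is strictly positive.

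Finally, I would argue by contradiction: if $[\T]=0$ in $H^{3}(M^{7};\R)$, write $\T=d\alpha$ for some $2$-form $\alpha$. Using compactness and Stokes' theorem,
\[
\int_{M^{7}} \T\wedge *\T \;=\; \int_{M^{7}} d\alpha\wedge *\T \;=\; \int_{M^{7}} d(\alpha\wedge *\T) \;\pm\; \int_{M^{7}} \alpha\wedge d*\T \;=\; 0,
\]
where the boundary term vanishes on the closed manifold $M^{7}$ and the last integral vanishes because $d*\T=0$. But this contradicts the fact that $\|\T\|^{2}_{L^{2}}=\mathrm{vol}(M^{7})\cdot\mu^{2}>0$. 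Hence $[\T]\neq 0$ and $H^{3}(M^{7};\R)\neq 0$.

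There is no real obstacle here: the content is essentially bookkeeping, assembling Theorem \ref{thm.equiv-cond} with the constancy of $\|\T\|^{2}=\mu^{2}$ and the Hodge-theoretic fact that a nonzero harmonic form on a closed Riemannian manifold is not exact. The only subtlety worth flagging is that compactness is used in two places — to apply Stokes and to ensure that the harmonic representative is unique in its class — both of which are built into the standing assumption of the corollary.
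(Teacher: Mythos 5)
Your proof is correct and is exactly the intended argument: the paper states this corollary without proof (citing Remark 5.5 of Friedrich--Ivanov), but its placement immediately after Theorem \ref{thm.equiv-cond} makes clear that the point is precisely that $\Ric^{\nabla}=0$ forces $d\T = 0$ and $d*\T=0$, so the nonzero form $\T$ is harmonic and hence represents a nontrivial class in $H^3(M^7;\R)$ by Hodge theory. Your Stokes-theorem verification that a nonzero harmonic form cannot be exact is a correct (if slightly more explicit than necessary) way to finish.
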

\begin{exa}
On the round sphere $S^7$ there exists a $\G_2$-structure (not cocalibrated)
such that $\mathcal{R}^{\nabla} = 0$ (see \cite{AgFr3}). 
In particular, the Ricci tensor vanishes,
$\Ric^{\nabla} = 0$. The characteristic torsion is coclosed, $\delta \T = 0$,
but not closed, $d\, \T \not= 0$.
\end{exa}
\begin{NB}
A cocalibrated $\G_2$-manifold with $\Ric^{\nabla} = 0$ and $\T \not\equiv 0$
cannot be of pure type $\Lambda^3_1$ or $\Lambda^3_{27}$. Indeed, if
\bdm
0 \ = \  \T_{27} \ = \ -  *  d \omega^3 + \frac{6}{7}\mu \omega^3 
\edm
we differentiate,
\bdm
0 \ = \ - \, d \, * \, d \, \omega^3 \ + \ \frac{6}{7} \, \mu \,
d \, \omega^3 
\edm
and combine the latter formula with equation (3) of Theorem
\ref{thm.equiv-cond}.  We conclude that $\mu = 0$, $d \omega^3 = 0$ and,
finally, $\T = 0$. The
second case, i.\,e.~$\T_1 = 0$, implies immediately $\mu = 0$ and $\T = 0$.
\end{NB}
\noindent
There exists a canonical $\nabla$-parallel spinor field $\Psi_0$ such that
\bdm
\nabla \Psi_0 \ = \ 0 \ , \quad \omega^3 \cdot \Psi_0 \ = \ - \, 7 \, \Psi_0 \ .
\edm
Since $\Lambda^3_{27} \cdot \Psi_0 =  0$ we obtain
\bdm
\T \cdot \Psi_0 \ = \ \T_1 \cdot \Psi_0 \ = \ - \, \mu \, \Psi_0 \ .
\edm
The integrability condition for a parallel spinor  (see \cite{FriedrichIvanov})
yields an algebraic
restriction for the derivative $\nabla \T$, namely
\bdm
\nabla_X \big( \T \cdot \Psi \big) \ = \ \big( \nabla_X \T \big) \cdot \Psi \
= \ 0 \ , \quad \sigma_{\T} \cdot \Psi \ = \ 0 \ , \quad \T^2 \cdot \Psi \ = \
\| \, \T \, \|^2 \, \Psi 
\edm
for any vector $X \in TM^7$ and any $\nabla$-parallel spinor field 
$\Psi$. In particular, the characteristic torsion $\T$ acts on the space of
all $\nabla$-parallel spinors. This condition is not so restrictive. For
example, the space of $3$-forms $\Sigma^3 \in \Lambda^3_{27}$ killing three
spinors has dimension $14$, the space killing four spinors has 
still dimension $9$.
%
\section{$\nabla$-parallel vector fields}
\noindent
Via the Riemannian metric we identify vectors with $1$-forms. Denote by
$\mathcal{P}^{\nabla}$ the space of all $\nabla$-parallel vector field 
($1$-forms). Any $\nabla$-parallel vector field $\theta$ is a Killing field and 
\bdm
2 \, \nabla^g \theta \ = \ d \, \theta \ = \ \theta \haken \T \ , \quad
\nabla^g _{\theta} \theta \ = \ 0 \ . 
\edm
holds. This formula together with $d \, \T = 0$ implies that $\T$ is preserved
by the flow of $\theta$,
\bdm
\mathcal{L}_{\theta} \T \ = \ 0 \ .
\edm
The Riemannian Ricci tensor on $\theta$  becomes
\bdm
\Ric^g( \theta \, , \, \theta) \ = \ \frac{1}{2} \, \|\, d \, \theta \, \|^2 \ .
\edm
The subgroup of $\G_2$ preserving four vectors in $\R^7$ is
trivial. The isotropy subgroups of two or three vectors in $\R^7$ coincide and
this group is isomorphic to $\SU(2) \subset \G_2$. Finally, the isotropy
subgroup of one vector is isomorphic to $\SU(3) \subset \G_2$ (see for example
\cite{Fri2}). This algebraic observation proves immediately the following
\begin{prop}
If $(M^7,g ,\omega^3)$ is not $\nabla$-flat, then the possible 
dimensions of  the space $\mathcal{P}^{\nabla}$ are $ 0 , \, 1$, or $3$. 
\end{prop}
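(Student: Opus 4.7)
The plan is to translate the question about $\nabla$-parallel vector fields into a purely algebraic question about the holonomy representation. Since $\nabla$ preserves the $\G_2$-structure $\omega^3$, its holonomy group $H := \mathrm{Hol}(\nabla)$ sits inside $\G_2$. By the holonomy principle, parallel vector fields on $M^7$ are in one-to-one correspondence with vectors in $\R^7$ fixed pointwise by the holonomy representation, so that
\[
\dim \mathcal{P}^{\nabla} \; = \; \dim V^{H}, \qquad V^{H} := \{ v \in \R^7 : h\cdot v = v \ \forall\, h \in H \}.
\]
Thus it suffices to determine which dimensions of $V^{H}$ are algebraically possible for a nontrivial subgroup $H \subset \G_2$.

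Next I would set $k := \dim V^{H}$ and run through the possibilities using the isotropy data recalled immediately before the proposition. If $k \geq 4$, then $H$ lies in the $\G_2$-isotropy of four independent vectors; that isotropy is trivial, so $H = \{e\}$ and $\nabla$ is flat, contradicting the hypothesis. Hence $k \leq 3$, and the only case to rule out is $k = 2$.

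Assume for contradiction that $k = 2$, and pick an orthonormal basis $v_1, v_2$ of $V^{H}$. Then $H$ is contained in the $\G_2$-stabiliser of the pair $(v_1, v_2)$, which by hypothesis equals $\SU(2)$. The crucial input from the preceding paragraph of the paper is that this is also the stabiliser of \emph{three} independent vectors, i.e.\ the $7$-dimensional representation of $\SU(2)\subset\G_2$ decomposes as $\R^3 \oplus W$ with $W$ a $4$-dimensional representation having no nonzero fixed vector. Consequently $V^{H} \supset \R^3$ and $k \geq 3$, contradicting $k = 2$. Therefore $k \in \{0, 1, 3\}$.

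In this argument the substantive content is entirely algebraic, and is exactly the observation already laid out in the paper: the jump in the fixed-vector dimension from $2$ to $3$ for subgroups of $\G_2$. The only thing left to verify carefully is the invocation of the holonomy principle, and the fact that the nontriviality hypothesis $\nabla \not\equiv$ flat is precisely what excludes $k \geq 4$ (equivalently, $H = \{e\}$). I expect no genuine obstacle beyond bookkeeping.
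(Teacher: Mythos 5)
Your proof is correct and follows exactly the paper's line of reasoning: the paper's ``proof'' is precisely the algebraic observation about $\G_2$-isotropy groups (trivial for four vectors, $\SU(2)$ for two or three, $\SU(3)$ for one) combined with the holonomy principle, which you have simply spelled out in more detail. No gap; the key step excluding $\dim\mathcal{P}^{\nabla}=2$ --- that the stabiliser of two vectors already fixes a three-dimensional subspace --- is the same in both arguments.
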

%
%
\subsection{The case of three $\nabla$-parallel vector fields}
%
\noindent
We discuss the case that there are three orthonormal and 
$\nabla$-parallel $1$-forms $\theta_1
, \, \theta_2, \, \theta_3$. Then $\omega^3(\theta_1, \, \theta_2, \, -)$ is
$\nabla$-parallel, too. If it does not coincide with $\theta_3$ , then we have
at least four $\nabla$-parallel $1$-forms, i.e. the $\G_2$-connection $\nabla$
is flat. Under our assumption $\mathcal{R}^{\nabla} \not\equiv 0$ we conclude
that
\bdm
\omega^3(\theta_1 \, , \,  \theta_2 \, , \, - ) \ = \ \theta_3 \ , \quad
\omega^3(\theta_1 \, , \, \theta_2 \, , \, \theta_3 ) \ = \ 1 \ .
\edm
The holonomy of the connection $\nabla$ is contained in $\su(2) \subset \g_2$.
Moreover, the spinors
\bdm
\Psi_0 , \quad \Psi_1\ := \ \theta_1 \cdot \Psi_0  ,
\quad \Psi_2\ := \ \theta_2 \cdot \Psi_0 , 
\quad\Psi_3\ := \ \theta_3 \cdot \Psi_0 
\edm
are all $\nabla$-parallel spinors. The torsion form $\T$ acts as a symmetric
endomorphism on the space $\Lin(\Psi_0,\Psi_1,\Psi_2,\Psi_3)$ and
$\T \cdot \Psi_0 = - \, \mu \ \Psi_0$. Consequently, $\T$ acts on the
$3$-dimensional space $\Lin(\Psi_1,\Psi_2,\Psi_3)$ and $\T^2 = \| \T \|^2
\cdot \Id= \mu^2 \cdot \Id$. We decompose the torsion form into
\bdm
\T \ = \ \T_1 +  \T_{27} \ = \ \frac{1}{7} \, \mu \, \omega^3 + \T_{27}
\edm 
and we use the known action of $\omega^3$ on spinors:

\bdm
\omega^3 \cdot \Psi_0 \ = \ -  7 \Psi_0 , \quad
\omega^3 \cdot \Psi_i \ = \ \Psi_i,  \ i = 1,2,3 , \quad
\T_{27} \cdot \Psi_0 \ = \ 0 .
\edm
Finally, $\T_{27} \in
\Lambda^3_{27}$ preserves the space $\Lin(\Psi_1,\Psi_2,\Psi_3)$ and
\bdm
\T^2_{27} + \frac{2}{7} \, \mu \, \T_{27} \ = \ \frac{48}{49} \, \mu^2 .
\edm
Without loss of generality we may assume that $\Psi_1 , \, \Psi_2 , \, \Psi_3$
are eigenspinors of $\T_{27}$,
\bdm
\T_{27} \cdot \Psi_i \ = \ m_i \, \Psi_i \ , \quad m_i^2 \ + \
\frac{2}{7} \, m_i \, \mu \ = \ \frac{48}{49} \, \mu^2 , \ i \ = \ 1,2,3 .
\edm
We fix an
orthonormal basis $e_1 \, , \ldots \, , e_7$ such that
\bdm
\omega^3 \ = \ e_{127} \, + \, e_{135} \, - \, e_{146} \, - \, e_{236} \, - \,
 e_{245} \, + \, e_{347} \, + \, e_{567} 
\edm
and $\theta_1 = e_1, \, \theta_2 = e_2, \, \theta_3 = e_7$. This is possible,
since we already have $\omega^3(\theta_1, \theta_2, \theta_3) = 1$. Let
\bdm
\T_{27} \ = \ \sum_{i<j<k} t_{ijk} \, e_{ijk}
\edm 
be the $3$-form $\T_{27}$ and introduce the following numbers:
\bdm
a \ := \ t_{236} + t_{245} \, , \quad b \ := \ t_{347} + t_{567} ,
\quad c \ := \ t_{235} - t_{246} .
\edm
A purely algebraic computation yields the following
\begin{lem}
The space of all $3$-forms $\T_{27} \in \Lambda^3_{27}$ such that $\T_{27}
\cdot \Psi_i = m_i \, \Psi_i$, $i = 1,2,3$ is an affine space of dimension
$9$. A parameterization is given by
\begin{eqnarray*}
\T_{27} \ &=& \ \big( - \, \frac{m_1}{2} \, -  \, b \big) \, e_{127} \, - \,
t_{156} \, e_{134} \, + \, \big( \frac{m_1}{2} \, + \, t_{146} \, + a \big) \,
e_{135} \\
&&- \, t_{145}  \, e_{136} \, + \,  t_{145} \, e_{145} \, + \,  t_{146} \, e_{146} \, + \, t_{156}
\, e_{156} \, - \, t_{256} \, e_{234} \\
&&+ t_{235} \, e_{235} \, + \, t_{236} \, e_{236} \, + \, t_{245} \, e_{245} \,  
+ \,  t_{246} \, e_{246} \, + t_{256} \, e_{256} \, + \, t_{347} \, e_{347} \\
&&+ t_{467} \, e_{357} \, - \,  t_{457} \, e_{367} \, + \, t_{457} \, e_{457} \,
+ \, t_{467} \, e_{467} \, 
+ \, t_{567} \, e_{567} \ . 
\end{eqnarray*}
and
\bdm
m_1  + 2  a  +  2  b \ = \ m_2  , \quad
- 2  a + 2 b \ = \ m_3 , \quad c \ = \ 0 .
\edm
\end{lem}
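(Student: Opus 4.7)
The plan is direct linear algebra on the Clifford representation. I would begin by identifying the condition $\T_{27}\in\Lambda^3_{27}$ with its spinor equivalent $\T_{27}\cdot\Psi_0=0$, which is valid since the Clifford action of $\Lambda^3_1\oplus\Lambda^3_7$ on $\Psi_0$ already exhausts the complement of $\R\Psi_0$ in $\Delta_7$. Because $\T_{27}$ acts self-adjointly on $\Delta_7$ (as any $3$-form does in dimension $7$) and annihilates $\Psi_0$, each product $\T_{27}\cdot\Psi_i$ for $i=1,2,3$ lies in the $7$-dimensional orthogonal complement $\Lin(e_1\cdot\Psi_0,\ldots,e_7\cdot\Psi_0)$. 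The three eigenspinor equations thus amount to the requirement that the linear map
\[
\Phi:\Lambda^3_{27}\longrightarrow\Hom\bigl(\Lin(\Psi_1,\Psi_2,\Psi_3),\,\Lin(e_1\cdot\Psi_0,\ldots,e_7\cdot\Psi_0)\bigr),\quad \T_{27}\longmapsto\bigl(\Psi_i\mapsto\T_{27}\cdot\Psi_i\bigr)
\]
takes the prescribed diagonal value $\Psi_i\mapsto m_i\Psi_i$.

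By the algebraic observation closing the previous section, the kernel of $\Phi$ — the space of $3$-forms in $\Lambda^3_{27}$ annihilating all four of $\Psi_0,\Psi_1,\Psi_2,\Psi_3$ — has dimension $9$. Therefore $\im\Phi$ has dimension $18$ inside the $21$-dimensional codomain, and every non-empty preimage of $\Phi$ is a $9$-dimensional affine space. The codimension $3$ of $\im\Phi$ imposes exactly three scalar compatibility conditions on the target, which after the explicit computation below translate into $m_1+2a+2b=m_2$, $-2a+2b=m_3$ and $c=0$.

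To read off the explicit parameterization, I would expand $\T_{27}=\sum_{i<j<k}t_{ijk}\,e_{ijk}$ and evaluate each Clifford product $e_{ijk}\cdot e_s\cdot\Psi_0$ for $s\in\{1,2,7\}$ by means of $e_pe_q+e_qe_p=-2\delta_{pq}$ together with $\omega^3\cdot\Psi_0=-7\Psi_0$, rewriting every product of four or more generators acting on $\Psi_0$ in the basis $\{\Psi_0,e_1\cdot\Psi_0,\ldots,e_7\cdot\Psi_0\}$. Matching coefficients in $\T_{27}\cdot\Psi_i=m_i\Psi_i$ and in $\T_{27}\cdot\Psi_0=0$ yields a linear system in the $t_{ijk}$ whose consistency forces precisely the three scalar relations $m_1+2a+2b=m_2$, $-2a+2b=m_3$, $c=0$; under those, the reduction — organised by the orthogonal splitting $\R^7=\Lin(\theta_1,\theta_2,\theta_3)\oplus\Lin(e_3,e_4,e_5,e_6)$ corresponding to the holonomy reduction $\SU(2)\subset\G_2$ — produces the claimed expressions for $t_{127},t_{134},t_{135},t_{136},t_{234},t_{357},t_{367}$ and forces the sixteen unlisted $e_{ijk}$-coefficients to vanish. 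The main obstacle is combinatorial, not conceptual: executing roughly one hundred Clifford products without sign errors and recognising which linear combinations of the coefficient equations collapse into the three compact scalar relations of the lemma.
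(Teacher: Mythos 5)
Your approach coincides with the paper's: the Lemma is stated there with no more justification than ``a purely algebraic computation,'' and that computation --- expanding $\T_{27}=\sum t_{ijk}\,e_{ijk}$ in the adapted frame, evaluating the Clifford products on $\Psi_0,\Psi_1,\Psi_2,\Psi_3$, and matching coefficients --- is exactly what you propose. Your preliminary observations (that $\Lambda^3_{27}$ is precisely the kernel of Clifford multiplication on $\Psi_0$ by the $35=1+7+27$ versus $8$ dimension count, that $3$-forms act symmetrically so each $\T_{27}\cdot\Psi_i$ lands in the complement of $\Psi_0$, and that the nonempty fibres of $\Phi$ are affine of dimension $27-18=9$) are correct and give a clean a priori explanation of the dimension $9$. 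One step as you phrase it would mislead if executed literally: the three relations $m_1+2a+2b=m_2$, $-2a+2b=m_3$, $c=0$ are \emph{not} compatibility conditions on the target, since $a$, $b$, $c$ are linear functions of the unknown $\T_{27}$, and in fact every diagonal triple $(m_1,m_2,m_3)$ is realizable --- the $3$-dimensional space of diagonal endomorphisms lies entirely inside the $18$-dimensional image $\im\Phi$. The relations are instead part of the description of the fibre: they pin down the three source coordinates $a$, $b$, $c$ in terms of the $m_i$ while the remaining $12-3=9$ coordinates stay free. This does not derail the argument, because the coefficient matching produces the relations regardless, but the codimension-of-the-image heuristic is the wrong explanation for where they come from. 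Finally, the actual content of the Lemma is the explicit coefficient table, and that computation is announced rather than performed; since the paper does the same, this is not a defect relative to the source, but it is where all the work (and all the risk of sign errors) lives.
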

\begin{cor}
For $X \perp \Lin(\theta_1, \theta_2, 
\theta_3)$ we have 
\bdm
\T( \theta_i \, , \, \theta_j \, , \, X) \ = \ 0 \ , \quad 
\T \ = \ ( \theta_1 \haken \T ) \wedge \theta_1 \ + \ 
( \theta_2 \haken \T ) \wedge \theta_2 \ + \  ( \theta_3 \haken \T ) \wedge
\theta_3 \ . 
\edm
\end{cor}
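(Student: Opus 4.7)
My plan is to deduce both claims from the 19-term explicit parameterization of $\T_{27}$ given in the preceding lemma, working in the orthonormal frame $\theta_1=e_1$, $\theta_2=e_2$, $\theta_3=e_7$ in which $\omega^3 = e^{127}+e^{135}-e^{146}-e^{236}-e^{245}+e^{347}+e^{567}$. I would split the 35 basis $3$-forms according to how many of their indices lie in the vertical set $V=\{1,2,7\}$: zero (``purely horizontal''), one, two (``mixed''), or three (only $e^{127}$). The condition $\T(\theta_i,\theta_j,X)=0$ for $X\in H:=\Lin(\theta_1,\theta_2,\theta_3)^\perp$ is exactly the vanishing of the twelve ``mixed'' coefficients, those of $e^{12k}$, $e^{1k7}$ and $e^{2k7}$ with $k\in\{3,4,5,6\}$. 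Direct inspection of the lemma's parameterization shows that none of these monomials occurs in $\T_{27}$; none occurs in $\omega^3$ either; hence they are absent from $\T=(\mu/7)\omega^3+\T_{27}$, which is the first claim.

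For the decomposition formula, the same inspection reveals that no purely horizontal monomial $e^{ijk}$ with $i,j,k\in\{3,4,5,6\}$ appears in $\T_{27}$ or in $\omega^3$. So every nonvanishing component of $\T$ contains at least one factor from $\{\theta_1,\theta_2,\theta_3\}$, and by the first claim contains exactly one such factor --- except possibly $e^{127}$ itself. A direct check shows that each ``single-vertical'' monomial $e^{ijk}$ is reproduced exactly once by $\sum_s(\theta_s\haken e^{ijk})\wedge\theta_s$, via the unique index $s$ with $\theta_s\in\{e^i,e^j,e^k\}$, while the ``triple-vertical'' monomial $e^{127}$ would be reproduced three times. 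The identity therefore reduces to showing that the $e^{127}$-coefficient of $\T$ vanishes.

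That coefficient equals $\mu/7+(-m_1/2-b)$, and the lemma's relations $m_1+2a+2b=m_2$, $-2a+2b=m_3$ simplify it to $\mu/7-(m_1+m_2+m_3)/4$. The main obstacle is therefore the ``trace identity'' $m_1+m_2+m_3 = 4\mu/7$, which the algebraic eigenvalue condition $m_i^2+(2/7)m_i\mu = (48/49)\mu^2$ alone does not enforce. To extract it I would invoke the integrability relation $\sigma_\T\cdot\Psi_0=0$, which follows from $d\T=0$ via the parallel-spinor equation $(3d\T-2\sigma_\T)\cdot\Psi=0$ recalled in Section~3. Computing $\sigma_\T = \tfrac{1}{2}\sum_i (e_i\haken\T)\wedge(e_i\haken\T)$ from the lemma's explicit formula for $\T_{27}$, and expanding its Clifford action on $\Psi_0$ by using $\omega^3\cdot\Psi_0=-7\Psi_0$ and the $G_2$-identity $(X\haken\omega^3)\cdot\Psi_0=3\,X\cdot\Psi_0$, should pin the sum $m_1+m_2+m_3$ down to $4\mu/7$ and so close the argument.
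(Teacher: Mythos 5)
Your verification of the first claim, and your reduction of the second claim to the vanishing of the $e_{127}$-coefficient of $\T$, are both correct and are exactly what the paper's (unwritten) proof amounts to: the Corollary is read off from the Lemma's parameterization, in which no monomial with exactly two indices in $\{1,2,7\}$ and no purely horizontal monomial occurs. The genuine gap is the last step of your plan. The ``trace identity'' $m_1+m_2+m_3=4\mu/7$ is not a consequence of the hypotheses --- it is false in general. Immediately after the Corollary the paper itself solves $m_i^2+\tfrac{2}{7}\mu\,m_i=\tfrac{48}{49}\mu^2$ and finds each $m_i\in\{6\mu/7,\,-8\mu/7\}$ independently, so that $\T(\theta_1,\theta_2,\theta_3)=\tfrac{\mu}{7}-\tfrac14(m_1+m_2+m_3)$ takes the four values $0,\pm\mu/2,\mu$; the value $\mu$ is actually realized, by $\T=\mu\,\theta_1\wedge\theta_2\wedge\theta_3$ on $X^4\times S^3$ (the case classified later in the paper). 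For that torsion $\sigma_{\T}=0$, so the constraint $\sigma_{\T}\cdot\Psi_0=0$ you propose to exploit is vacuous there and cannot force the sum $m_1+m_2+m_3$ to a single value; it adds nothing beyond the quadratic eigenvalue equations you already have. Hence the proposed final step cannot be carried out.

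What your (correct) overcounting analysis really shows is that, granting the first claim and the absence of a purely horizontal part, one has $\sum_{i=1}^3(\theta_i\haken\T)\wedge\theta_i=\T+2\,\T(\theta_1,\theta_2,\theta_3)\,\theta_1\wedge\theta_2\wedge\theta_3$, so the displayed decomposition is exact precisely when $\T(\theta_1,\theta_2,\theta_3)=0$ (the case used later, e.g.\ for two special $\nabla$-parallel fields), and otherwise must be read modulo the $\theta_1\wedge\theta_2\wedge\theta_3$-component. The content the Lemma actually delivers --- and all that a correct proof can deliver --- is $\T(\theta_i,\theta_j,X)=0$ for $X\perp\Lin(\theta_1,\theta_2,\theta_3)$ together with the vanishing of the component of $\T$ in $\Lambda^3\big(\Lin(\theta_1,\theta_2,\theta_3)^\perp\big)$. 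The right move at the end of your argument was therefore to record this correction term (or restrict to $\T(\theta_1,\theta_2,\theta_3)=0$), not to attempt a proof of an identity that the paper's own subsequent computation contradicts.
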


\noindent
We solve the linear system with respect to $a$ and $b$ :
\bdm
a \ = \ - \, \frac{1}{4} \, \big( m_1 \, - \, m_2 \, + \, m_3 \big) \ , \quad
b \ = \ \frac{1}{4} \, \big( - \, m_1 \, + \, m_2 \, + \, m_3 \big)
\edm
In particular, 
\bdm
 m_1 \, + \, 2 \, b \ = \ \frac{1}{2} \, \big( m_1 \, + \, m_2 \, + \, m_3
 \big) \ .
\edm
We are interested in the value 
\bdm
\T(\theta_1 , \theta_2  ,  \theta_3) \ = \, \frac{1}{7} \, \mu \,  
- \, \frac {m_1}{2} -  b \ = \ \frac{1}{7} \, \mu  -  \frac{1}{4}
\big( m_1  +  m_2  +  m_3 \big).
\edm
We have $8$ possibilities, namely 
\bdm
m_i \ = \ \frac{6}{7} \, \mu \ , \quad \mbox{or} \quad m_i \ = \ - \,
\frac{8}{7} \, \mu \ .
\edm
Therefore, 
\bdm
\T(\theta_1  , \theta_2  ,  \theta_3) \ = \ 0 , \ \pm \,
\frac{1}{2} \, \mu \quad \mbox{or} \quad \mu  .
\edm
We summarize the result.
\begin{thm} \label{Theorem3}
Let $(M^7 ,g ,\omega^3)$ be a cocalibrated $\G_2$-manifold and $\nabla$
its characteristic connection. Suppose that $\Ric^{\nabla} = 0$, 
$\|\T\|^2 = \mu^2 > 0$  and
$\mathcal{R}^{\nabla} \not\equiv 0$. If $\theta_1 , \theta_2 , \theta_3$ are
three orthonormal and $\nabla$-parallel vector fields, then
\begin{enumerate}
\item $\omega^3(\theta_1 , \theta_2 , \theta_3) = 1$ .
\item $\T(\theta_1 , \theta_2 , \theta_3)$ is constant and has only four
  possible values, $ 0 , \, \pm \mu/2 , \, \mu$.
\item $\T(\theta_i , \theta_j , X ) = 0$ for $X \perp \Lin(\theta_1 , \theta_2
  , \theta_3)$.
\end{enumerate}
In particular
\begin{eqnarray*}
\T &=& ( \theta_1 \haken \T ) \wedge \theta_1+ 
( \theta_2 \haken \T ) \wedge \theta_2 + ( \theta_3 \haken \T ) \wedge
\theta_3 \\
&=& d \, \theta_1 \wedge \theta_1  +  
d \theta_2  \wedge \theta_2  +   d \theta_3 \wedge
\theta_3 \ . 
\end{eqnarray*}
and
\bdm
[ \theta_1 ,  \theta_2] \ = \ -  \T(\theta_1 , \theta_2 , \theta_3)\,\theta_3 
\edm
is proportional to $\theta_3$. The $3$-dimensional space $\Lin(\theta_1 ,
\theta_2 , \theta_3)$ is closed with respect to the Lie bracket and
is a Lie subalgebra of the Killing vector fields. This algebra is either
commutative or isomorphic to $\so(3)$.
\end{thm}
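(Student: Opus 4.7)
The plan is to assemble the theorem from the algebraic material already built up before the statement. Assertion~(1), that $\omega^3(\theta_1,\theta_2,\theta_3)=1$, has in effect been proved in the paragraph opening the subsection: the $1$-form $\omega^3(\theta_1,\theta_2,\cdot)$ is $\nabla$-parallel, so either it produces a fourth parallel direction (forcing $\nabla$-flatness, which is excluded by $\mathcal{R}^{\nabla}\not\equiv 0$) or it equals $\theta_3$. Assertion~(3) and the first displayed identity $\T = \sum_i (\theta_i \haken \T)\wedge \theta_i$ are the content of the Corollary following the Lemma, read off from the explicit parametrisation of $\T_{27}$ once the adapted basis $\theta_1=e_1,\theta_2=e_2,\theta_3=e_7$ is chosen. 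The equivalent form $\T = \sum_i d\theta_i \wedge \theta_i$ follows from $d\theta_i = \theta_i \haken \T$, which itself was recorded at the start of the section.

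For (2), I would substitute the values
\[
a \;=\; -\tfrac{1}{4}(m_1-m_2+m_3), \qquad b \;=\; \tfrac{1}{4}(-m_1+m_2+m_3)
\]
coming from the Lemma into the expression
\[
\T(\theta_1,\theta_2,\theta_3) \;=\; \tfrac{1}{7}\mu \;-\; \tfrac{m_1}{2} \;-\; b \;=\; \tfrac{1}{7}\mu \;-\; \tfrac{1}{4}(m_1+m_2+m_3),
\]
and enumerate the $2^3=8$ sign patterns given by $m_i \in \{\tfrac{6}{7}\mu,\,-\tfrac{8}{7}\mu\}$. A direct inspection shows that the eight values collapse onto the four-element set $\{0,\,\pm\mu/2,\,\mu\}$. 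Constancy is then automatic: $\mu$ is a global constant (shown earlier in the section), so $\T(\theta_1,\theta_2,\theta_3)$ is a continuous function on $M^7$ taking values in this finite set, hence is locally constant.

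For the Lie-algebraic content, I would use the defining identity $\nabla_X Y - \nabla_Y X - [X,Y] = T(X,Y)$ of the torsion together with $g(T(X,Y),Z) = \T(X,Y,Z)$. Parallelism of the $\theta_i$ collapses the left-hand side to $-[\theta_i,\theta_j]$, so $[\theta_i,\theta_j] = -T(\theta_i,\theta_j)^{\sharp}$. By assertion~(3), this vector has no component perpendicular to $\Lin(\theta_1,\theta_2,\theta_3)$, so the span is closed under the bracket. Setting $c := \T(\theta_1,\theta_2,\theta_3)$ and invoking total antisymmetry of $\T$, the brackets are entirely determined by $[\theta_1,\theta_2] = -c\,\theta_3$ and its cyclic permutations; this three-dimensional Lie algebra is abelian when $c=0$ and a rescaling of $\so(3)$ when $c\ne 0$. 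There is no real obstacle left once the Lemma is in hand: the one point that deserves care is the closure of $\Lin(\theta_1,\theta_2,\theta_3)$ under the bracket, which is precisely what (3) supplies and what prevents the three Killing fields from generating something larger.
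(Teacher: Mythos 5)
Your proposal is correct and follows essentially the same route as the paper: the theorem is explicitly a summary of the algebraic computations preceding it (the Lemma, its Corollary, the formula $\T(\theta_1,\theta_2,\theta_3)=\tfrac{1}{7}\mu-\tfrac{1}{4}(m_1+m_2+m_3)$, and the enumeration of the eigenvalues $m_i\in\{\tfrac{6}{7}\mu,-\tfrac{8}{7}\mu\}$), and your enumeration of the eight sign patterns correctly collapses to $\{0,\pm\mu/2,\mu\}$. Your closing argument for the bracket relation via $[\theta_i,\theta_j]=-T(\theta_i,\theta_j)^{\sharp}$ and assertion (3) is exactly what the paper leaves implicit, and it is sound.
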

\begin{NB}
Since we do not assume that the torsion form $\T$ is $\nabla$-parallel, it is
not obvious by general arguments that $[\theta_1 \, , \, \theta_2 ] = - \,
\T(\theta_1 \, , \, \theta_2)$ is again $\nabla$-parallel. 
\end{NB}

\noindent
We can classify the case of $\T(\theta_1,\theta_2,\theta_3) = \mu$
immediately. Indeed, we have then $\|\T\|^2 \geq \mu^2$. On the other hand, we
know that $\|\T\|^2 = \mu^2$ holds. It follows that
\bdm
\T = \ \mu \, \theta_1 \wedge \theta_2 \wedge \theta_3  \quad \mbox{and}
\quad \nabla \T \ = \ 0 \ .
\edm   
Cocalibrated $\G_2$-structures with characteristic holonomy $\su(2)$ and a
characteristic torsion of the given type have been classified at the end of 
our paper \cite{Fri2}. We apply this result and obtain
\begin{thm}
Let $(M^7 ,g,\omega^3)$ be a complete, cocalibrated $\G_2$-manifold and $\nabla$
its characteristic connection. Suppose that $\Ric^{\nabla} = 0$. If 
$\theta_1 , \theta_2 , \theta_3$ are
three orthonormal and $\nabla$-parallel vector fields and $\T(\theta_1 ,
\theta_2 , \theta_3) = \mu$, then the universal covering of $M^7$ is isometric
to the product $X^4 \times S^3$, where $X^4$ is a complete anti-self dual and
Ricci flat Riemannian manifold.
\end{thm}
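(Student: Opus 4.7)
The plan is to construct a $\nabla^g$-parallel orthogonal splitting of $TM^7$ and invoke the de Rham decomposition theorem. The starting point, which the author has already extracted just before the statement, is that $\T(\theta_1,\theta_2,\theta_3)=\mu$ together with $\|\T\|^2=\mu^2$ forces
\[
\T \ = \ \mu\, \theta_1 \wedge \theta_2 \wedge \theta_3 \ , \qquad \nabla \T \ = \ 0 \ .
\]
Consequently the characteristic holonomy is contained in $\su(2)\subset \g_2$, and the distribution $V := \Lin(\theta_1,\theta_2,\theta_3)$ is $\nabla$-parallel.

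Set $H := V^\perp$; it is $\nabla$-parallel as well, since $\nabla$ preserves $g$. The next step is to upgrade both distributions to be $\nabla^g$-parallel. Using $\nabla\theta_i=0$ and $\nabla = \nabla^g + \tfrac{1}{2}\T$,
\[
\nabla^g_X\theta_i \ = \ -\tfrac{1}{2}\,\T(X,\theta_i,\,\cdot\,) \ .
\]
Because $\T=\mu\,\theta_1\wedge\theta_2\wedge\theta_3$, the $1$-form $\T(X,\theta_i,\,\cdot\,)$ takes values in the span of $\theta_1,\theta_2,\theta_3$ for every $X$, so $\nabla^g_X\theta_i \in V$. Hence $V$ (and therefore $H$) is $\nabla^g$-parallel. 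Since $M^7$ is complete, the de Rham decomposition theorem then yields an isometric splitting $\widetilde{M}^7 \cong X^4 \times N^3$ of the universal cover, in which $X^4$ and $N^3$ are the maximal integral manifolds of $H$ and $V$ through a chosen base point.

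It remains to identify the factors. On $N^3$, the restrictions of $\theta_1,\theta_2,\theta_3$ form a global orthonormal frame of Killing fields with $[\theta_i,\theta_j]=-\mu\,\varepsilon_{ijk}\theta_k$ by Theorem \ref{Theorem3}; the Lie algebra they span is $\su(2)$, and the integrated action is a free transitive isometric action of $\SU(2)$, presenting $N^3$ as $\SU(2)$ with a bi-invariant metric (the structure constants are totally antisymmetric), i.e.~a round $3$-sphere. For $X^4$, the formula $\T=\mu\,\theta_1\wedge\theta_2\wedge\theta_3$ vanishes on any triple with a horizontal entry, so the Riemannian Ricci identity $\Ric^g(X,Y)=\tfrac14\sum_{i,j}\T(X,e_i,e_j)\,\T(Y,e_i,e_j)$ forces $\Ric^g|_H\equiv 0$. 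Moreover $\nabla$ and $\nabla^g$ agree on pairs of horizontal fields (again because $\T$ vanishes on such pairs), so the Riemannian holonomy of $X^4$ is contained in the restriction of $\hol(\nabla)$ to $H$, hence in $\su(2)\subset\so(4)$; reduction to one of the chiral halves of $\so(4)=\su(2)_+\oplus\su(2)_-$ is the standard holonomy characterization of a (Ricci flat) anti-self-dual $4$-manifold.

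The main obstacle is the clean global identification of $N^3$ as the round $3$-sphere: one must argue carefully, from completeness, simple connectedness, and the bracket relations, that $N^3$ is not merely locally but globally isometric to $\SU(2)$ with bi-invariant metric. Alternatively, once $\T=\mu\,\theta_1\wedge\theta_2\wedge\theta_3$, $\nabla\T=0$ and $\hol(\nabla)\subset\su(2)$ are in hand, the classification at the end of \cite{Fri2} of cocalibrated $\G_2$-structures with characteristic torsion of this form can be invoked directly; this is the route the author explicitly takes.
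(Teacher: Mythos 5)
Your proposal is correct, and its first step coincides exactly with the paper's: from $\T(\theta_1,\theta_2,\theta_3)=\mu$ and $\|\T\|^2=\mu^2$ you conclude $\T=\mu\,\theta_1\wedge\theta_2\wedge\theta_3$ and $\nabla\T=0$. After that the routes diverge. The paper stops here and simply invokes the classification, given at the end of \cite{Fri2}, of cocalibrated $\G_2$-structures with characteristic holonomy $\su(2)$ and parallel torsion of this degenerate type; it does not carry out the splitting argument. You instead give a self-contained de Rham argument: the identity $\nabla^g_X\theta_i=-\tfrac12\,\T(X,\theta_i,\cdot)$ together with the special form of $\T$ shows that $V=\Lin(\theta_1,\theta_2,\theta_3)$ and $V^\perp$ are $\nabla^g$-parallel, completeness gives the isometric splitting of the universal cover, the orthonormal Killing frame with totally antisymmetric constant structure constants $[\theta_i,\theta_j]=-\mu\,\varepsilon_{ijk}\theta_k$ identifies the $3$-dimensional factor as a complete, simply connected space of constant curvature $\mu^2/4$, hence the round sphere, and the vanishing of $\T$ on horizontal legs gives both $\Ric^g|_{V^\perp}=0$ and $\nabla=\nabla^g$ on $V^\perp$, so that the $4$-dimensional factor has Riemannian holonomy in $\SU(2)\subset\SO(4)$ and is therefore hyperk\"ahler, i.e.~Ricci flat and anti-self-dual. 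All of these steps are sound (the one point you flag, the global identification of the $3$-dimensional factor, is indeed settled by constant curvature plus completeness and simple connectedness, so it is not a real obstacle), and your argument has the advantage of being verifiable within this paper rather than resting on the external classification; the paper's citation route is shorter and yields the statement as a special case of a broader result. The only implicit assumption, shared with the paper's surrounding hypotheses, is $\mu\neq 0$, without which the spherical factor degenerates.
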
 
\noindent
If $\T(\theta_1 , \theta_2 , \theta_3) = 0$ the $3$-dimensional
abelian Lie group acts on $M^7$ locally free as a group of isometries and 
preserves 
the torsion form $\T$. Moreover, we obtain the $2$-forms $d \theta_i = \theta_i 
\haken \T$ and
\bdm
\mathcal{L}_{\theta_i}(\theta_j \haken \T) \ = \ 0 \, , \quad
\theta_i \haken \theta_j \haken \T \ = \ 0 \ .
\edm 
We will investigate the special case , where two of these $2$-forms 
vanish, later.
\begin{NB}
We do not have any results in case of $|\T(\theta_1 , \theta_2 , \theta_3)| 
= \mu/2$. 
\end{NB}
%
\subsection{Special $\nabla$-parallel vector fields}
\noindent
There are special $\nabla$-parallel vector fields ($1$-forms), namely
\bdm
\mathcal{SP}^{\nabla} \ := \ \big\{ \theta \ : \ \nabla^g \theta \ = \ 0 \
\mbox{and} \ \theta \haken \T \ = \ 0 \big\} \subset\mathcal{P}^{\nabla} . 
\edm
A consequence of the formula in Theorem \ref{Theorem3} is the following
\begin{cor}
If $\T \not\equiv 0$ and $\mathcal{R}^{\nabla} \not\equiv 0$, then 
$\dim (\mathcal{SP}^{\nabla}) \leq 2$.
\end{cor}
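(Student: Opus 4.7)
The plan is to argue by contradiction: suppose $\dim \mathcal{SP}^{\nabla} \geq 3$ and deduce $\T = 0$. Since the extra condition $\theta \haken \T = 0$ cutting $\mathcal{SP}^{\nabla}$ out of $\mathcal{P}^{\nabla}$ is linear in $\theta$, the set $\mathcal{SP}^{\nabla}$ is a vector subspace of $\mathcal{P}^{\nabla}$. The preceding Proposition rules out $\dim \mathcal{P}^{\nabla} = 2$ and $\dim \mathcal{P}^{\nabla} \geq 4$ under the standing hypothesis $\mathcal{R}^{\nabla} \not\equiv 0$, so the assumption $\dim \mathcal{SP}^{\nabla} \geq 3$ actually forces $\dim \mathcal{P}^{\nabla} = \dim \mathcal{SP}^{\nabla} = 3$. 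One may therefore pick three orthonormal $\nabla$-parallel one-forms $\theta_1, \theta_2, \theta_3 \in \mathcal{SP}^{\nabla}$.

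With this triple in hand I would invoke Theorem \ref{Theorem3}. Its standing hypotheses $\Ric^{\nabla} = 0$, $\mathcal{R}^{\nabla} \not\equiv 0$ and $\|\T\|^2 = \mu^2 > 0$ all hold here; the third is immediate from the constancy of $\|\T\|^2 = \mu^2$ established in Section 3 together with the assumption $\T \not\equiv 0$. The theorem furnishes the decomposition
\[ \T \ = \ d\theta_1 \wedge \theta_1 \ + \ d\theta_2 \wedge \theta_2 \ + \ d\theta_3 \wedge \theta_3. \]
On the other hand, any $\nabla$-parallel one-form $\theta$ satisfies $d\theta = \theta \haken \T$ (recorded at the start of Section 4), and the defining property of $\mathcal{SP}^{\nabla}$ gives $\theta_i \haken \T = 0$ for $i = 1, 2, 3$. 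Hence each $d\theta_i$ vanishes, the decomposition collapses to $\T = 0$, and this contradicts $\T \not\equiv 0$.

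There is essentially no technical obstacle here: all the substantive work is already contained in Theorem \ref{Theorem3}, and the corollary simply reads off its decomposition in the presence of the additional vanishing condition defining $\mathcal{SP}^{\nabla}$. Conceptually, the bound $\dim \mathcal{SP}^{\nabla} \leq 2$ is sharp in this argument because a third special $\nabla$-parallel vector field would be forced to contribute a nonzero $d\theta \wedge \theta$ summand to $\T$, which is incompatible with the condition $\theta \haken \T = 0$.
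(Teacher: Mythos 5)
Your argument is correct and is precisely the one the paper intends: the paper derives this corollary as "a consequence of the formula in Theorem \ref{Theorem3}", namely the decomposition $\T = \sum_i (\theta_i \haken \T)\wedge\theta_i$, which collapses to $\T=0$ when all three $\theta_i$ are special. Your additional care in checking that $\dim\mathcal{SP}^{\nabla}\geq 3$ forces $\dim\mathcal{P}^{\nabla}=3$ (via the preceding Proposition) and that the hypotheses of Theorem \ref{Theorem3} are met is exactly the bookkeeping the paper leaves implicit.
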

\begin{prop}
If $\theta \in \mathcal{SP}^{\nabla}$ is special $\nabla$-parallel, then
\bdm
\nabla^g_{\theta} \, \omega^3 \ = \ 0  , \quad  d ( \theta \haken \omega^3) 
\ = \ 
\theta \haken d \, \omega^3 , \quad \mathcal{L}_{\theta} ( \theta \haken
\omega^3) \ = \ 0. 
\edm
\end{prop}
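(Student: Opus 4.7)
My approach is to exploit the two simultaneous consequences of the hypothesis $\theta \haken \T = 0$: it nullifies the $\T$-correction between $\nabla$ and $\nabla^g$ in the $\theta$-direction, and via the Killing identity $2\nabla^g \theta = d\theta = \theta \haken \T$ from the beginning of Section~4 it promotes $\theta$ to a globally $\nabla^g$-parallel vector field, not merely a field parallel along its own flow.

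For the first identity $\nabla^g_\theta \omega^3 = 0$, I will invoke $\nabla \omega^3 = 0$ together with the standard formula expressing $(\nabla^g_X - \nabla_X)$ acting on a $p$-form as a sum of contractions of $\frac{1}{2}\,\T(X,\cdot,\cdot)$ against the form. Setting $X = \theta$, every summand factors through $\theta \haken \T$ and hence vanishes, giving $\nabla^g_\theta \omega^3 = \nabla_\theta \omega^3 = 0$.

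For the remaining two identities I will first upgrade this to $\mathcal{L}_\theta \omega^3 = 0$. Since $\nabla^g \theta \equiv 0$ globally, the torsion-free identity
\[
(\mathcal{L}_\theta \alpha)(Y_1,\ldots,Y_p) \ = \ (\nabla^g_\theta \alpha)(Y_1,\ldots,Y_p) \ + \ \sum_{k=1}^{p}\alpha(Y_1,\ldots,\nabla^g_{Y_k}\theta,\ldots,Y_p)
\]
collapses to $\nabla^g_\theta \alpha$; applied to $\omega^3$ this produces $\mathcal{L}_\theta \omega^3 = 0$. The second identity then drops out of Cartan's magic formula $\mathcal{L}_\theta \omega^3 = d(\theta \haken \omega^3) + \theta \haken d\omega^3$, read as a linear relation between $d(\theta \haken \omega^3)$ and $\theta \haken d\omega^3$. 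The third identity follows from the Leibniz rule $\mathcal{L}_\theta(\theta \haken \omega^3) = (\mathcal{L}_\theta \theta) \haken \omega^3 + \theta \haken \mathcal{L}_\theta \omega^3$ together with $\mathcal{L}_\theta \theta = [\theta,\theta] = 0$.

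I do not anticipate any serious obstacle; the whole proposition reduces to bookkeeping once one spots the two dual consequences of $\theta \haken \T = 0$. The only mild care required is in the first step, where one has to write out explicitly how the contorsion $\frac{1}{2}\T$ acts on a $3$-form rather than simply cite a general principle.
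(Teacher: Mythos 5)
Your first identity is handled exactly as in the paper: since $\theta\haken\T=0$, the contorsion term $\tfrac12\rho_*(\theta\haken\T)$ relating $\nabla_\theta$ and $\nabla^g_\theta$ on tensors vanishes, so $\nabla^g_\theta\omega^3=\nabla_\theta\omega^3=0$. For the other two identities you take a genuinely different and somewhat cleaner route: the paper never isolates $\mathcal{L}_\theta\omega^3=0$, but instead computes $d(\theta\haken\omega^3)$ directly in an orthonormal frame adapted to $\theta$ using $d=\sum_i\nabla^g_{e_i}(\cdot)\wedge e_i$, and then deduces the third identity from the second by contracting twice with $\theta$. Your observation that global $\nabla^g$-parallelism of $\theta$ collapses the Lie derivative to $\mathcal{L}_\theta\omega^3=\nabla^g_\theta\omega^3=0$, and your Leibniz-rule argument $\mathcal{L}_\theta(\theta\haken\omega^3)=(\mathcal{L}_\theta\theta)\haken\omega^3+\theta\haken\mathcal{L}_\theta\omega^3=0$, are both correct and package the second and third claims into one computation.

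The one step you cannot leave as ``a linear relation'' is the sign. With the standard conventions ($\iota_X$ inserting the vector into the first argument, $\mathcal{L}_X=d\,\iota_X+\iota_X\,d$), the vanishing of $\mathcal{L}_\theta\omega^3$ gives $d(\theta\haken\omega^3)=-\,\theta\haken d\omega^3$, the \emph{opposite} sign to the one displayed in the proposition; and $\theta\haken d\omega^3$ is not zero in general (from $\T=-*d\omega^3+\mu\,\omega^3$ one has $d\omega^3=*(\mu\,\omega^3-\T)$, whose contraction with $\theta$ does not vanish), so the sign is not vacuous. The two versions coincide only under the convention that $\haken$ inserts the vector into the \emph{last} slot, which flips the sign of $\theta\haken(\text{$4$-form})$ but not of $\theta\haken(\text{$3$-form})$. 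So either fix that convention explicitly at the outset, or track the sign honestly through Cartan's formula; as written your argument proves the identity only up to a sign that happens to be the crux of the statement.
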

\begin{proof}
Since $\theta \haken \T = 0$ we get
\bdm
\nabla_{\theta} S \ = \ \nabla^g_{\theta} S \ + \ \frac{1}{2}\, \rho_*(\theta
\haken \T)(S) \ = \ \nabla^g_{\theta} S 
\edm
for any tensor S. Here $\rho_*$ denotes action
of $\so(7)$ in the corresponding tensor representation. In particular, 
\bdm
\nabla^g_{\theta} \, \omega^3 \ = \ 0 .
\edm
Since $\theta$ is $\nabla^g$-parallel, we have $\nabla^g(\theta \haken
\omega^3) = \theta \haken \nabla^g \omega^3$ . Using an orthonormal frame with
$\theta = e_7$ we compute the differential
\begin{eqnarray*}
d \, (\theta \haken \omega^3) &=& \sum_{i=1}^7 \nabla^g_{e_i}(\theta \haken
\omega^3) \wedge e_i \ = \ \sum_{i=1}^6 (\theta \haken \nabla^g_{e_i} \omega^3)
\wedge e_i \ + \ 0 \ = \ \sum_{i=1}^6 \theta \haken (\nabla^g_{e_i} \omega^3
\wedge e_i)  \\
&=&   \sum_{i=1}^6 \theta \haken (\nabla^g_{e_i} \omega^3 \wedge e_i) \ + \ 
\theta \haken (\nabla^g_{\theta} \omega^3 \wedge \theta) \ = \ \theta \haken d
 \omega^3 . 
\end{eqnarray*}
Finally, $\mathcal{L}_{\theta}( \theta \haken \omega^3) = \theta \haken
d(\theta \haken \omega^3) = \theta \haken \theta \haken \ d \, \omega^3 = 0$.
\end{proof}
\begin{thm}
Let $(M^7,g,\omega^3)$ be a compact, cocalibrated $\G_2$-manifold and $\nabla$
its characteristic connection. Suppose that $\Ric^{\nabla} = 0$, $\|\T||^2 =
\mu^2 > 0$  and
$\mathcal{R}^{\nabla} \not\equiv 0$. Then the space of harmonic $1$- forms
coincides with $\mathcal{SP}^{\nabla}$, 
\bdm
H^1 ( M^7 \, ; \, \R) \ = \ \big\{ \theta \, : \ \Delta^g \theta \ = \ 0
\big\} \ = \ \mathcal{SP}^{\nabla} \ .
\edm
In particular, the second Betti number is bounded, $b_2(M^7) \leq 2$.
\end{thm}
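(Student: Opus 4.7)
The plan is to combine Hodge theory with the classical Bochner argument, leveraging the fact recalled in the introduction that the hypothesis $\Ric^{\nabla} = 0$ forces the Riemannian Ricci tensor to be non-negative via the sum-of-squares formula
\[
\Ric^g(X,X) \ = \ \frac{1}{4} \sum_{i,j=1}^{7} \T(X,e_i,e_j)^2 .
\]
On the compact manifold $M^7$, Hodge theory identifies $H^1(M^7;\R)$ with the kernel of $\Delta^g$ on $1$-forms, so it suffices to establish the equality $\ker \Delta^g = \mathcal{SP}^{\nabla}$.

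The easy inclusion $\mathcal{SP}^{\nabla} \subset \ker \Delta^g$ is immediate: by definition any $\theta \in \mathcal{SP}^{\nabla}$ is $\nabla^g$-parallel, hence both closed and coclosed. For the reverse inclusion, let $\theta$ be harmonic and apply the Weitzenb\"ock identity
\[
\Delta^g \theta \ = \ (\nabla^g)^{*} \nabla^g \theta \ + \ \Ric^g(\theta) .
\]
Pairing with $\theta$ and integrating over $M^7$ gives
\[
0 \ = \ \int_{M^7} \big( \|\nabla^g \theta\|^2 \, + \, \Ric^g(\theta,\theta) \big) \, \de \vol ,
\]
and both integrands are pointwise non-negative, so each vanishes identically. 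Hence $\nabla^g \theta = 0$, and the vanishing of $\Ric^g(\theta,\theta)$ together with the sum-of-squares expression forces $\T(\theta,e_i,e_j) = 0$ for all $i,j$, that is, $\theta \haken \T = 0$. These are precisely the two conditions defining $\mathcal{SP}^{\nabla}$.

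The stated Betti number bound then follows directly from the preceding corollary, which gives $\dim \mathcal{SP}^{\nabla} \leq 2$ whenever $\T \not\equiv 0$ and $\mathcal{R}^{\nabla} \not\equiv 0$. The only substantive point in the argument is recognising that the sum-of-squares structure of $\Ric^g$ simultaneously supplies the non-negativity required for Bochner and the pointwise equivalence $\Ric^g(\theta,\theta) = 0 \Longleftrightarrow \theta \haken \T = 0$; once this observation is in hand, the classical Bochner rigidity for harmonic $1$-forms on a compact manifold of non-negative Ricci curvature closes the proof.
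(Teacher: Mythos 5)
Your proposal is correct and follows essentially the same route as the paper: the Weitzenb\"ock/Bochner argument combined with the identity $\Ric^g(\theta,\theta)=\frac{1}{2}\|\theta\haken\T\|^2$ coming from $\Ric^{\nabla}=0$, which makes both integrands non-negative and forces $\nabla^g\theta=0$ and $\theta\haken\T=0$. The concluding Betti number bound via the corollary $\dim\mathcal{SP}^{\nabla}\leq 2$ is also exactly the paper's intended argument (the paper's ``$b_2$'' is evidently a typo for $b_1$).
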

\begin{proof}
The result follows directly from the Weitzenboeck formula for $1$-forms and the link between $\Ric^g$ and the torsion form $\T$, 
\bdm
0 \ = \ \int_{M^7} g (\Delta^g \theta \, , \, \theta ) \ = 
\int_{M^7} ||\nabla^g \theta||^2 + \int_{M^7} \Ric^g(\theta,\theta) \ =  
\int_{M^7} ||\nabla^g \theta||^2 +  \frac{1}{2} \, \int_{M^7}
\|\theta \haken \T\|^2  . 
\edm
\end{proof}
%
\subsection{The case of two special $\nabla$-parallel vector fields}
%
\noindent
Suppose that there exist two special $\nabla$-parallel vector fields
$\theta_1, \theta_2$,
\bdm
\nabla^g \ \theta_1 \ = \ \nabla^g \theta_2 \ = \ 0 \ , \quad 
\theta_1 \haken \T \ = \ \theta_2 \haken \T \ = \ 0 \ .
\edm
Then $\omega^3(\theta_1 , \theta_2, \, -) = \theta_3$ is the third
$\nabla$-parallel (non-special) vector field and we have
\bdm
\T(\theta_1 , \theta_2 ,\theta_3) \ = \ 0 , 
\quad [\theta_1 ,\theta_2 ] \ = \ [\theta_1 ,\theta_3] \ = \ 
[\theta_2,\theta_3] \ = \ 0 . 
\edm
The conditions $\theta_1 \haken \T = \theta_2 \haken \T = 0$ restrict the
algebraic type of the torsion form. In fact, Theorem \ref{Theorem3} yields that
the possible torsion forms depend on two parameters only. Indeed, there are
two possibilities.
The first case:
\bdm
a \ = \ \frac{2}{7} \, \mu , \ b \ = \ \frac{5}{7} \, \mu, \ 
m_1 \ = \ - \, \frac{8}{7} \, \mu , \ m_2 \ = \ m_3 \ = \ \frac{6}{7} \, \mu .
\edm
The second case: 
\bdm
a \ = \ \frac{2}{7} \, \mu , \ b \ = \ - \, \frac{2}{7} \, \mu , \ 
m_1 \ = \ \frac{6}{7} \, \mu , \ m_2 \ = \ \frac{6}{7} \, \mu , \
m_3 \ = \ - \, \frac{8}{7} \, \mu .
\edm
Introducing a new notation for the frame
\bdm
f_1 \ := \ e_3 , \ f_2 \ := \ e_4 , \ f_3 \ := \ e_5 , \ f_4 \ := \ e_6 , 
\ f_5 \ := \ e_7
\edm
we obtain the following formula for the torsion form: 
\begin{eqnarray*}
\T &=& (t_{125}  +  \mu/7)  f_{125}  +  t_{245}  (f_{135}  +  f_{245})  +  
t_{235}  ( -  f_{145}  +  f_{235})  +  (t_{345} +  \mu/7)  f_{345} \ , \\
b &=& t_{125} \, + \, t_{345} \ = \ \frac{5}{7} \, \mu \ \ \mbox{or} \ \ - \,
\frac{2}{7} \, \mu \\
\mu^2 &=& \|\T\|^2 \ = \ (t_{125} \, + \, \frac{\mu}{7})^2 \, + \, (t_{345}\,
+ \, \frac{\mu}{7})^2 \, + \, 2 \, t^2_{245}
\, + \, 2 \, t^2_{235}  .   
\end{eqnarray*}
If $M^7$ is complete, its universal covering splits into $N^5 \times
\R^2$ and the torsion $\T$ as well as the form $\theta_3 = e_7 = f_5$ are
forms on $N^5$. This follows form $\mathcal{L}_{\theta_i} \T = 0 \, , \, 
\mathcal{L}_{\theta_i} \theta_3 = 0$ for $i = 1,2$.  We reduced the dimension.
$(N^5 , g , \nabla , \T ,\theta_3)$ is a $5$-dimensional
Riemannian manifold equipped with a torsion form $\T$ as well as a metric
connection $\nabla$ such that
\bdm
d  *  \T \ = \ 0 \ , \, d \, \T \ = \ 0 , \ ||\T||^2 \ = \ 0 , \ 
\Ric^{\nabla} \ = \ 0 , \ \mathcal{R}^{\nabla} \ \not\equiv \ 0 , \
\hol(\nabla) \subset \ \su(2) \ \subset \ \g_2  
\edm
hold. $\theta_3$ is  $\nabla$-parallel on $N^5$, 
\bdm
\nabla \theta_3 \ = \ 0 , \ d \theta_3 \ = \ \theta_3 \haken \T  , 
\ \T \ =
\ \theta_3 \wedge d \, \theta_3 , \ 0 \ = \ d \, \T \ 
= \ d \theta_3\wedge d  \theta_3 .
\edm
Consider the case of $b = - 2 \mu /7$. Then 
\bdm
t_{125}  +  \frac{\mu}{7} \ = \ -  t_{345} \, - \, \frac{\mu}{7}
\edm
and we obtain
\bdm
* \, \T = \ - \, \theta_3 \haken \T \ = \ -  d  \theta_3 , \quad 
*  d \theta_3 \ = \ -  \T \ = \ - \, d \theta_3 \wedge \theta_3 . 
\edm
We multiply the latter equation by $d \theta_3$:
\bdm
\| d \theta_3\|^2 \ = \ d \theta_3 \wedge *  d\theta_3 \ = \ - 
\theta_3\wedge d \theta_3 \wedge d \theta_3 \ = \ 0 .
\edm
Consequently, $b = -2 \, \mu/7$ implies that the torsion form vanishes, $\T =
0$, i.e. the second case is impossible.\\

\noindent
We observe that there
are three $\nabla$-parallel $2$-forms on $N^5$, namely, 
\bdm
\Omega^2_i \ := \ \theta_i \haken \big(\omega^3  -  \theta_1  \wedge \theta_2 \, \wedge
\, \theta_3) .
\edm
Consequently, $\hol(\nabla) \subset \ \su(2)$. We can express these forms
in our local frame,
\begin{eqnarray*}
\Omega^2_1 &=&  f_{13} \, - \, f_{24} , \quad 
\Omega^2_2 \ = \ - \, f_{14} \, - \, f_{23} , \quad 
\Omega^2_3 \ = \ \ f_{12} \, + \, f_{34} . 
\end{eqnarray*}
Remark that
\bdm
\big( \theta_3 \haken \T \, , \, \Omega^2_1 \big) \ = \ 
\big( \theta_3 \haken \T \, , \, \Omega^2_2 \big) \ = \ 0 \ , \quad
\big( \theta_3 \haken \T \, , \, \Omega^2_3 \big) \ = \  b \, + \, \frac{2}{7} 
\, \mu \ = \ \mu 
\edm
holds.
\begin{thm}
The kernel of $\T$
\bdm
E^2 \ := \ \big\{ X \in TN^5 \, : \ X \haken \T \ = \ 0 \big\} 
\edm
is a $2$-dimensional subbundle of $TN^5$. The tangent bundle splits into
two subbundles of dimension $2$ and $3$, respectively,
\bdm
TN^5 \ = \ E^2 \ \oplus \ (E^2)^{\perp} .
\edm
$\theta_3$ belongs to $(E^2)^{\perp}$ and 
the torsion form is given by
\bdm
\T \ = \ \mu \, f_1^* \wedge f_2^* \wedge \theta_3 ,
\edm
where $f_1^* , f_2^* , \theta_3$ is an orthonormal basis in $(E^2)^{\perp}$. 
Both subbundles are involutive and $N^5$ splits locally (but the $2$- und
$3$-dimensional leaves are not totally geodesic).
\end{thm}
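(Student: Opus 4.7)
The plan has three steps: extract the decomposable form of $\T$ on $N^5$; prove integrability of each distribution separately, from the closedness/coclosedness of $\T$; and rule out total geodesicity by a de Rham contradiction.

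For the first step, I would use the identity $\T = \theta_3 \wedge d\theta_3$ established on $N^5$. Taking $d$ of both sides gives $d\theta_3 \wedge d\theta_3 = 0$, so $d\theta_3$ is decomposable. Contracting with $\theta_3$ gives $\theta_3 \haken d\theta_3 = \theta_3 \haken \theta_3 \haken \T = 0$, so $d\theta_3 \in \Lambda^2(\theta_3^\perp)$. A nonzero decomposable 2-form on the $4$-dimensional Euclidean space $\theta_3^\perp$ is a scalar multiple of a wedge of two orthonormal 1-forms, and matching $\|d\theta_3\|^2 = \|\T\|^2 = \mu^2$ (absorbing the sign into orientation) forces $d\theta_3 = \mu f_1^* \wedge f_2^*$ for an orthonormal pair $f_1^*, f_2^* \in \theta_3^\perp$. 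Consequently $\T = \mu \, f_1^* \wedge f_2^* \wedge \theta_3$, and $E^2 = \{X : X \haken \T = 0\}$ is precisely the common kernel of $f_1^*, f_2^*, \theta_3$, giving rank $2$ and $(E^2)^\perp = \Lin((f_1^*)^\sharp, (f_2^*)^\sharp, \theta_3^\sharp)$ with the stated orthonormal frame.

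For the second step, integrability of $E^2$ follows from Cartan's formula: if $X \in E^2$, then $\mathcal{L}_X \T = d(X \haken \T) + X \haken d\T = 0$, so $[X, Y] \haken \T = \mathcal{L}_X(Y \haken \T) - Y \haken \mathcal{L}_X \T = 0$ for $X, Y \in E^2$. For $(E^2)^\perp$, let $g_1^*, g_2^*$ denote the orthonormal pair spanning its annihilator. In five dimensions $*\T = \pm \mu \, g_1^* \wedge g_2^*$; since $\mu$ is constant and $d*\T = 0$, we obtain $d(g_1^* \wedge g_2^*) = 0$, i.e.\ $dg_1^* \wedge g_2^* = g_1^* \wedge dg_2^*$. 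A short wedge-product manipulation then yields $dg_i^* \wedge g_1^* \wedge g_2^* = 0$ for $i = 1, 2$, which is precisely the Frobenius criterion for $(E^2)^\perp$.

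For the third step, if both foliations were totally geodesic, de Rham's theorem would give a local Riemannian product $N^5 = N^3 \times N^2$ with $(E^2)^\perp$ and $E^2$ as the two factors, forcing $\T$ to be a multiple of the volume form of $N^3$. The formula $\Ric^g(X, X) = \frac{1}{4}\sum_{i,j} \T(X, e_i, e_j)^2$ then makes $N^2$ flat and $N^3$ Einstein with positive scalar curvature, so $N^3$ is a round $S^3$ and $\nabla$ on $N^3$ is the Cartan--Schouten flat connection; hence $\mathcal{R}^{\nabla} \equiv 0$ on $N^5$, contradicting the standing hypothesis. I expect the main difficulty to lie in the integrability step for $(E^2)^\perp$, where the coclosedness of $\T$ does essential work via Hodge duality, as opposed to the mere $d\T = 0$ used for $E^2$.
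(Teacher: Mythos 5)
Your proposal is correct and reaches every conclusion of the theorem, but it gets there by a partly different route, and the difference is worth recording. For the rank and the decomposable shape of $\T$, the paper works in the adapted frame: it plugs the surviving coefficients ($b=5\mu/7$) into an explicit formula for $\mathrm{Det}(\theta_3\haken\T)$ on $\theta_3^{\perp}$ and finds it vanishes. You instead argue frame-independently from $\T=\theta_3\wedge d\theta_3$, $d\T=0\Rightarrow d\theta_3\wedge d\theta_3=0$, $\theta_3\haken d\theta_3=0$ and $\|d\theta_3\|^2=\|\T\|^2=\mu^2>0$; since $d\theta_3\wedge d\theta_3$ is (twice) the Pfaffian whose square is the paper's determinant, the two computations encode the same fact, but yours exhibits directly why it holds and simultaneously delivers $\T=\mu\,f_1^*\wedge f_2^*\wedge\theta_3$ without the coordinate bookkeeping. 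For involutivity your treatment of $(E^2)^{\perp}$ is essentially the paper's (Frobenius from $d*\T=0$, i.e.\ $d(g_1^*\wedge g_2^*)=0$, followed by the same wedge manipulation); for $E^2$ you replace the paper's Frobenius computation from $d\T=0$ by the Cartan-formula argument that the kernel distribution of a closed form of constant rank is involutive --- same input, cleaner packaging. One caveat: the parenthetical claim that the leaves are \emph{not} totally geodesic is asserted without proof in the paper; your de Rham argument (local product $\Rightarrow$ $N^3$ round, $N^2$ flat, $\nabla$ Cartan--Schouten flat, contradicting $\mathcal{R}^{\nabla}\not\equiv 0$) is a genuine addition, but it only rules out \emph{both} foliations being totally geodesic simultaneously, which is formally weaker than the statement that neither is; a full justification would require computing the second fundamental forms from $\nabla^g=\nabla-\frac{1}{2}\T$ directly.
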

\begin{proof}
We compute the determinant of the skew symmetric endomorphism $\theta_3
\haken \T$ on the space of all vectors being orthogonal to $\theta_3$,
\bdm
\mbox{Det}(\theta_3 \haken \T) \ = \ \frac{1}{4} \big( - \, b^2 \, - \, \frac{4}{7} \, b
\, \mu \, + \, \frac{45}{49} \, \mu^2 \big)^2 \ = \ 0 \ . 
\edm
This proves that the dimension of $E^2$ equals two. Let $f^*_1 , f^*_2 , 
f^*_3 , f^*_4 , f^*_5 = \theta_3$ be an orthonormal frame such that
\bdm
\Lin(f^*_1 \, , \, f^*_2 \, , \, f^*_5 ) \ = \ \big(E^2 \big)^{\perp} \ ,
\quad
\Lin(f^*_3 \, , \, f^*_4 ) \ = \ E^2  \ . 
\edm
Since $\mu$ is constant and $d \, \T \, = \, d\, * \T \, = \, 0$ we have
\bdm
d \big( f^*_1 \wedge f^*_2 \wedge f^*_5 \big) \ = \ 0 \ , \quad
d \big( f^*_3 \wedge f^*_4 \big) \ = \ 0 \ . 
\edm
We differentiate the equations $f^*_3 \wedge f^*_3 \wedge f^*_4 = 0 \, , \,
f^*_4 \wedge f^*_3 \wedge f^*_4 = 0$,
\begin{eqnarray*}
0 &=& d  f^*_3 \wedge (f^*_3 \wedge f^*_4) \, - \, f^*_3 \wedge d 
(f^*_3 \wedge f^*_4) \ = \  d  f^*_3 \wedge (f^*_3 \wedge f^*_4) \\
0 &=& d  f^*_4 \wedge (f^*_3 \wedge f^*_4) \, - \, f^*_4 \wedge d 
(f^*_3 \wedge f^*_4) \ = \  d  f^*_4 \wedge (f^*_3 \wedge f^*_4) \ .
\end{eqnarray*}
By the Frobenius Theorem, the bundle $(E^2)^{\perp}$ is involutive. Similarly
we have
\bdm
df^*_1 \wedge (f^*_1 \wedge f^*_2 \wedge f^*_5) \ = \ 
df^*_2 \wedge (f^*_1 \wedge f^*_2 \wedge f^*_5) \ = \ 
df^*_5 \wedge (f^*_1 \wedge f^*_2 \wedge f^*_5) \ = \ 0 
\edm
and the bundle $E^2$ is involutive.
\end{proof}
\noindent
This splitting is not $\nabla$-parallel ($\nabla \T \not= 0)$, but 
the flow of $\theta_3$ preserves the splitting ($\mathcal{L}_{\theta_{3}} \T = 0$). The Ricci tensor preserves the
splitting, too. Indeed, it depends only on $\T$ and we compute easily:
\begin{thm}
The Ricci tensor $\Ric^g$ preserves the splitting of the tangent bundle and
\bdm
\Ric^g_{|E^2} \ = \ 0 , \quad \Ric^g_{|(E^2)^{\perp}} \ = \ \frac{1}{2} 
\mu^2 \, \Id . 
\edm
In particular, the Ricci tensor of $(N^5 , g)$ has  constant
eigenvalues, and these are $0$ and $\mu^2/2 > 0$.
\end{thm}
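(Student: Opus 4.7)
The plan is to compute both sides directly from the general formula
\[
\Ric^g(X,Y) \ = \ \frac{1}{4} \, \sum_{i,j=1}^{5} \T(X,e_i,e_j) \, \T(Y,e_i,e_j)
\]
recalled in the introduction (valid since $\Ric^{\nabla} = 0$), using the explicit expression $\T = \mu \, f_1^* \wedge f_2^* \wedge \theta_3$ obtained in the previous theorem. No further geometric input is needed.

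First, I would observe that for any $X \in E^2$ we have $X \haken \T = 0$ by definition, hence $\T(X,e_i,e_j) = 0$ for all $i,j$. Plugging this into the formula gives $\Ric^g(X,Y) = 0$ for all $Y \in TN^5$. This simultaneously proves $\Ric^g\restr_{E^2} = 0$ and, since $\Ric^g$ is symmetric, shows that the endomorphism associated to $\Ric^g$ vanishes on $E^2$ and preserves its orthogonal complement $(E^2)^{\perp}$; hence the splitting $TN^5 = E^2 \oplus (E^2)^{\perp}$ is preserved.

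Next, I would evaluate the Ricci tensor on the frame $f_1^*, f_2^*, \theta_3$ of $(E^2)^{\perp}$. The only non-vanishing component of $\T$ in this frame is $\T(f_1^*,f_2^*,\theta_3) = \mu$ (up to permutation/sign). For $X = f_1^*$, the sum over $i,j$ has exactly two non-zero terms, coming from $(e_i,e_j) = (f_2^*,\theta_3)$ and $(\theta_3,f_2^*)$, each contributing $\mu^2$, which yields $\Ric^g(f_1^*,f_1^*) = \tfrac{1}{4}(2\mu^2) = \tfrac{\mu^2}{2}$. The same count works for $f_2^*$ and $\theta_3$. For off-diagonal pairs such as $\Ric^g(f_1^*,f_2^*)$, every product $\T(f_1^*,e_i,e_j)\T(f_2^*,e_i,e_j)$ vanishes, because the pair $(e_i,e_j)$ making the first factor non-zero forces $f_2^*$ to appear among $(e_i,e_j)$, killing the second factor. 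Therefore $\Ric^g\restr_{(E^2)^{\perp}} = \tfrac{\mu^2}{2}\,\Id$.

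The only mildly delicate point is bookkeeping the indices and signs in the sum so that the two surviving terms really add (rather than cancel), but this is automatic once one uses the skew-symmetric convention for $\T$ in both slots of each factor. Constancy of the eigenvalues $0$ and $\mu^2/2$ then follows immediately from the constancy of $\mu = \|\T\|$ established at the start of Section 3.
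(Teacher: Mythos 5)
Your proposal is correct and follows exactly the route the paper intends: since $\Ric^{\nabla}=0$, the Riemannian Ricci tensor is the quadratic expression $\Ric^g(X,Y)=\frac{1}{4}\sum_{i,j}\T(X,e_i,e_j)\T(Y,e_i,e_j)$ in the torsion, and plugging in $\T=\mu\,f_1^*\wedge f_2^*\wedge\theta_3$ gives $0$ on $E^2$ and $\frac{1}{2}\mu^2\,\Id$ on $(E^2)^{\perp}$ (the paper merely remarks that one "computes easily" from the dependence of $\Ric^g$ on $\T$). Your index bookkeeping is sound — the diagonal terms are squares so they add, the off-diagonal terms vanish for the reason you give — and constancy of the eigenvalues follows from the constancy of $\mu$ established in Section 3.
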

\noindent
The $2$-form $d \theta_3$ is invariant under the flow of $\theta_3$, 
\bdm
\mathcal{L}_{\theta_3} \big(d \theta_{3}\big) \ = \ 0  \quad \mbox{and} \quad d
\theta_3 \wedge d \theta_3 \ = \ 0  . 
\edm
If the orbit space $Z^4 := N^5/\theta_3$ is smooth, its tangent bundle
splits into two involutive $2$-dimensional subbundles. $d \theta_3$ defines
a $2$-form on $Z^4$ satisfying all the conditions of Theorem \ref{Theorem1}.
However, we have an additional condition for $(N^5, g, \nabla, \T,
\theta_3)$, namely the holonomy of $\nabla$ should be contained in $\su(2)
\subset \g_2$ and the holonomy representation is in $\C^2 \subset \R^5$. This
is equivalent to the condition that there are three $\nabla$-parallel
$2$-forms
$\Omega^2_1 , \Omega^2_2 ,\Omega^2_3$. The $2$-form $\Omega^2_3$ plays a
special role on $N^5$. Indeed , it projects down to a K\"ahler form on $Z^4$.
\begin{prop}
\bdm
\nabla \, \Omega^2_3 \ = \ 0 , \quad d \, \Omega^2_3 \ = \ 0 , \quad
\mathcal{L}_{\theta_3} \Omega^2_3 \ = \ 0  .
\edm
In particular, if $Z^4$ is smooth, then $\Omega^2_3 \in \Lambda^2_{+}(Z^4)$
defines a $\nabla^g$-parallel, self-dual $2$-form on $Z^4$.
\end{prop}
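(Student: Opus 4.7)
My plan is to handle the three identities in turn, then derive the geometric conclusion. First, $\nabla \Omega^2_3 = 0$ is immediate from the definition $\Omega^2_3 = \theta_3 \haken \omega^3 - \theta_1 \wedge \theta_2$ together with $\nabla \omega^3 = 0$ (defining property of the characteristic connection) and $\nabla \theta_i = 0$ for $i = 1, 2, 3$ (the first two by hypothesis, and $\theta_3 = \omega^3(\theta_1, \theta_2, -)$ was noted $\nabla$-parallel earlier in the section).

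For the closedness $d\Omega^2_3 = 0$, my plan is to establish the pointwise identity
\[
\Omega^2_3 \ = \ \frac{1}{\mu}\big(\theta_3 \haken \T \ + \ *\T\big).
\]
In the adapted frame $f^*_1, \ldots, f^*_4, \theta_3$ in which $\T = \mu f^*_1 \wedge f^*_2 \wedge \theta_3$, the right-hand side reads $f^*_1 \wedge f^*_2 + f^*_3 \wedge f^*_4$, using $\theta_3 \haken \T = \mu f^*_1 \wedge f^*_2$ and, with the induced orientation on $N^5$, $*\T = \mu f^*_3 \wedge f^*_4$. I would identify the left-hand side with the same expression using four facts: $\Omega^2_3 \in \Lambda^2(\theta_3^\perp)$, its self-duality in the induced 4-dimensional orientation, the pairing $(\Omega^2_3, \theta_3 \haken \T) = \mu$ noted in the text, and the norm $\|\Omega^2_3\|^2 = \|\theta_3 \haken \omega^3\|^2 - \|\theta_1 \wedge \theta_2\|^2 = 3 - 1 = 2$. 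Once the identity is in place, closedness is clean: $d(\theta_3 \haken \T) = \mathcal{L}_{\theta_3}\T - \theta_3 \haken d\T = 0$ by Cartan's magic formula, since the $\nabla$-parallel field $\theta_3$ satisfies $\mathcal{L}_{\theta_3}\T = 0$ and $d\T = 0$ by Theorem \ref{thm.equiv-cond}; meanwhile $d*\T = 0$ is the other half of that theorem.

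The remaining identity and the descent are formal consequences. For $\mathcal{L}_{\theta_3}\Omega^2_3 = 0$, note that $\theta_3 \haken \Omega^2_3 = 0$ (double contraction kills the $\theta_3 \haken \omega^3$ summand, and $\theta_3 \perp \theta_1, \theta_2$ kills $\theta_1 \wedge \theta_2$); Cartan's formula then gives $\mathcal{L}_{\theta_3}\Omega^2_3 = \theta_3 \haken d\Omega^2_3 = 0$. For the ``in particular'' statement, $\Omega^2_3$ is basic for the $\theta_3$-fibration $N^5 \to Z^4$ (under smoothness) and descends to a closed 2-form, manifestly self-dual in the adapted frame. For $\nabla^g$-parallelism on $Z^4$, the key observation is that $\T(X, Y)^\sharp$ is purely vertical for horizontal $X, Y$, so $\nabla$ preserves the horizontal distribution and its horizontal part descends to a torsion-free metric connection on $Z^4$ --- necessarily the Levi-Civita connection --- and the $\nabla$-parallelism of the basic form $\Omega^2_3$ then descends to $\nabla^g_{Z^4}$-parallelism. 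The main obstacle I anticipate is the frame-level identification in the second paragraph; the rest follows from formal parallelism and Cartan calculus.
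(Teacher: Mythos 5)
Your proposal is correct and rests on the same key identity as the paper, namely $\Omega^2_3 = \frac{1}{\mu}\big(\theta_3\haken\T + *\T\big)$; the difference lies only in how the identity is established and in the extra material at the end. The paper verifies it by a direct computation in the frame $f_1,\ldots,f_5$, where $\T$ is explicitly parametrized and $\Omega^2_3 = f_{12}+f_{34}$; you instead characterize $\Omega^2_3$ invariantly in the adapted frame with $\T=\mu\, f^*_1\wedge f^*_2\wedge\theta_3$, using horizontality, self-duality, the pairing $(\Omega^2_3,\theta_3\haken\T)=\mu$ and the norm $\|\Omega^2_3\|^2=2$. The one ingredient you assert rather than prove is exactly the self-duality of $\Omega^2_3$ on $\theta_3^{\perp}$ with respect to the orientation for which $*\T=+\mu\, f^*_3\wedge f^*_4$; this is true, but checking it (and keeping the orientations of the two frames consistent) essentially reduces to the frame computation the paper performs --- the expression $\Omega^2_3=f_{12}+f_{34}$ already recorded in the text settles it. Note also that the conclusions are insensitive to this sign: even if the identity came out as $\frac{1}{\mu}\big(\theta_3\haken\T-*\T\big)$, closedness would still follow from $d*\T=0$ together with $d(\theta_3\haken\T)=d\,d\theta_3=0$, and the Cartan-formula argument for $\mathcal{L}_{\theta_3}\Omega^2_3=0$ is unchanged. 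Your norm computation should be phrased as a Pythagorean decomposition (since $(\theta_3\haken\omega^3)(\theta_1,\theta_2)=1$, the form $\theta_1\wedge\theta_2$ is an orthogonal summand of $\theta_3\haken\omega^3$), because $\|A-B\|^2=\|A\|^2-\|B\|^2$ is not a general identity. Finally, your last paragraph is a genuine addition: the paper leaves the descent to $Z^4$ implicit, whereas your observation that the purely horizontal part of $\T$ vanishes, so the horizontal part of $\nabla$ projects to the Levi-Civita connection of $Z^4$, gives a clean justification that the projected form is $\nabla^g$-parallel and self-dual.
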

\begin{proof}
Using the frame $f_1 , \ldots , f_5$ one easily computes the formula
\bdm
 \Omega^2_3 \ = \ \frac{1}{\mu} \, \big( * \T \, + \, d \, \theta_3 \big)
\ = \ \frac{1}{\mu} \, \big( * \T \, + \, \theta_3 \haken \T \big) \ .
\edm
Since $d * \T = 0$ we obtain $ d \, \Omega^2_3 = 0$. Moreover,
$\mathcal{L}_{\theta_3} \T = 0$,  and 
\bdm
\mathcal{L}_{\theta_3} \Omega^2_3 \ = \ \frac{1}{\mu} \,
\mathcal{L}_{\theta_3} (d \theta_3) \  = \ \frac{1}{\mu} \big( \theta_3 \haken
(\theta_3 \haken \T) \big) \ = \ 0 \ . \qedhere
\edm
\end{proof}
\noindent
A similar algebraic computation yields the following formulas.
\begin{prop}
\begin{eqnarray*}
d \, \Omega^2_1 &=& \mu \, \Omega^2_2 \wedge \theta_3 \ , \quad
d \, \Omega^2_2 \ = - \, \mu \, \Omega^2_1 \wedge \theta_3 \ , \\
\mathcal{L}_{\theta_3} \Omega^2_1 &=& \mu \, \Omega^2_2 \ , \quad \quad \
\mathcal{L}_{\theta_3} \Omega^2_2 \ = \ - \, \mu \, \Omega^2_1 \ .
\end{eqnarray*}
\end{prop}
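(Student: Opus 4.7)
The proof is an explicit computation carried out in the adapted orthonormal frame $f_1,\dots,f_5=\theta_3$ of the previous theorem, in which $\T=\mu\,f_1\wedge f_2\wedge\theta_3$ and the three parallel $2$-forms assume the standard normal form
\[
\Omega^2_1=f_{13}-f_{24},\qquad \Omega^2_2=-(f_{14}+f_{23}),\qquad \Omega^2_3=f_{12}+f_{34}.
\]
(The residual frame freedom from the splitting theorem is an $\SU(2)_+$-rotation of $\Lambda^2_+$ of the $4$-plane $\theta_3^{\perp}$, which transforms $\Omega^2_1,\Omega^2_2,\Omega^2_3$ among themselves; we fix that freedom so that these expressions hold.)

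Since $\nabla\Omega^2_i=0$ and $\nabla_X=\nabla^g_X+\tfrac12(X\haken\T)\cdot$ (with the $2$-form $(X\haken\T)$ viewed as an element of $\so(5)$ acting as a derivation on tensors), we obtain
\[
\nabla^g_X\Omega^2_i \;=\; -\tfrac12\,(X\haken\T)\cdot\Omega^2_i.
\]
Because $\nabla^g$ is torsion-free, $d\alpha=\sum_j f_j\wedge\nabla^g_{f_j}\alpha$, and therefore
\[
d\Omega^2_i \;=\; -\tfrac12\sum_{j=1}^{5}f_j\wedge\bigl((f_j\haken\T)\cdot\Omega^2_i\bigr).
\]
The only nonzero contractions are $f_1\haken\T=\mu f_{25}$, $f_2\haken\T=-\mu f_{15}$, $f_5\haken\T=\mu f_{12}$. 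Using the derivation formula $(f_a\wedge f_b)\cdot f_c=\delta_{ac}f_b-\delta_{bc}f_a$, a short direct computation gives $(f_{12})\cdot\Omega^2_1=-\Omega^2_2$, $(f_{25})\cdot\Omega^2_1=f_{45}$, $(f_{15})\cdot\Omega^2_1=-f_{35}$, and assembling these into the display above produces $d\Omega^2_1=\mu\,\Omega^2_2\wedge\theta_3$. The analogous computation for $\Omega^2_2$ gives $d\Omega^2_2=-\mu\,\Omega^2_1\wedge\theta_3$. (As a consistency check, the same procedure applied to $\Omega^2_3$ yields $d\Omega^2_3=0$, recovering the preceding proposition.)

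For the Lie derivatives apply Cartan's formula $\mathcal{L}_{\theta_3}=\theta_3\haken d+d\circ(\theta_3\haken\,\cdot)$. In the adapted frame neither $\Omega^2_1$ nor $\Omega^2_2$ contains the factor $f_5=\theta_3$, hence $\theta_3\haken\Omega^2_i=0$ for $i=1,2$. Consequently
\[
\mathcal{L}_{\theta_3}\Omega^2_1 \;=\; \theta_3\haken d\Omega^2_1 \;=\; \theta_3\haken(\mu\,\Omega^2_2\wedge\theta_3) \;=\; \mu\,\Omega^2_2,
\]
since $\Omega^2_2$ is orthogonal to $\theta_3$; identically, $\mathcal{L}_{\theta_3}\Omega^2_2=-\mu\,\Omega^2_1$.

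No deep obstacle appears; the whole argument is algebraic once the adapted frame is in place. The one point that requires care is sign tracking: the sign of $\mu$ in the two identities is tied to the sign convention chosen for $\Omega^2_2$ (here $\Omega^2_2=-(f_{14}+f_{23})$), and this must be matched with the orientation coming from the $\G_2$-structure $\omega^3$ via $\Omega^2_i=\theta_i\haken(\omega^3-\theta_1\wedge\theta_2\wedge\theta_3)$.
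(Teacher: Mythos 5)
Your proof is correct and takes essentially the same route as the paper: the paper simply quotes the identity $d\,\Omega^2 = \sum_j (f_j \haken \Omega^2)\wedge(f_j \haken \T)$ for $\nabla$-parallel forms from the cited reference and evaluates it in the adapted frame, which is exactly the formula you re-derive from $\nabla^g_X\Omega^2_i = -\tfrac12\,(X\haken\T)\cdot\Omega^2_i$ before carrying out the same frame computation (your Cartan-formula treatment of the Lie derivatives is the obvious complement). Only your parenthetical about the frame freedom is slightly off: the simultaneous normal forms are legitimate because the residual $\SU(2)\subset\G_2$ stabilizer of $\theta_1,\theta_2,\theta_3$ fixes $\Omega^2_1,\Omega^2_2,\Omega^2_3$ pointwise and is used only to rotate $\theta_3\haken\T$ into $\mu\,f_{12}$, so no conflict can arise.
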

\begin{proof}
Since the $2$-forms are $\nabla$-parallel, we can compute the derivatives
using the formula (see \cite{AgFr1})
\bdm
d\, \Omega^2 \ = \ \sum_{j=1}^5 (f_j \haken \Omega^2) \wedge (f_j \haken
\T) \ . \qedhere
\edm
\end{proof}
\begin{NB}
In the frame $f^*_1, \ldots , f^*_5$ we have
$\Omega^2_3 = f^*_1 \wedge f^*_2 \, + \, f^*_3 \wedge f^*_4$ , too. In
particular, $\Omega^2_3$ is completely defined by $\T$ and $\theta_3$. If
$Z^4$ is smooth and compact, then $Z^4 = S^2 \times T^2$, see \cite{ADM}, 
and the
connection $\nabla$ on $M^7 = N^5 \times \R^2 = S^3 \times T^2 \times \R^2$ becomes flat.
\end{NB}
%

    
\end{document}